\documentclass[11pt]{amsart}
\usepackage{amsmath, amssymb, amsthm}
\usepackage{geometry}
\usepackage{hyperref}
\usepackage{enumitem}
\usepackage{lmodern}
\usepackage{microtype}
\usepackage{mathtools}
\usepackage{cleveref}
\usepackage{lineno}

\theoremstyle{definition}
\newtheorem{definition}{Definition}[section]
\newtheorem{example}[definition]{Example}
\newtheorem{remark}[definition]{Remark}
\newtheorem{assumption}[definition]{Assumption}

\theoremstyle{plain}
\newtheorem{theorem}[definition]{Theorem}
\newtheorem{proposition}[definition]{Proposition}
\newtheorem{lemma}[definition]{Lemma}
\newtheorem{corollary}[definition]{Corollary}

\newcommand{\E}{\mathbb{E}}

\newcommand{\argmin}{\operatorname*{argmin}}

\newcommand{\abs}[1]{|#1|}

\title{Stability of Two-Stage Stochastic Programs under Problem-Dependent Costs}

\author{Nils Peyrouset}
\address{Dipartimento di Informatica\\Università di Pisa\\Largo B. Pontecorvo 3, 56127 Pisa, Italy}
\email{nils.peyrouset@ensae.fr}

\author{Benoît Tran}
\address{Dipartimento di Informatica\\Università di Pisa\\Largo B. Pontecorvo 3, 56127 Pisa, Italy}
\email{benoit.tran@tutanota.com}

\date{\today}

\keywords{Stochastic programming, Wasserstein distance, stability analysis, problem-dependent metrics, scenario reduction}
\subjclass[2020]{Primary 90C15, 49J55; Secondary 90C31, 60B10}

\begin{document}

% \linenumbers
\begin{abstract}
Classical stability theory for stochastic programming relies on the Wasserstein-Fortet-Mourier duality, which requires the ground cost to be a distance. When using problem-dependent costs instead of metrics, this duality no longer yields Fortet-Mourier bounds. This paper develops a direct stability approach using the primal optimal transport formulation. We prove that under minimal regularity conditions and a regret domination property, the optimal value function remains Lipschitz continuous with respect to problem-dependent transport costs. Our approach works directly with transport couplings rather than relying on dual representations to establish stability bounds. We present two applications: (1) For linear programs with continuous second-stage, we show that regret domination holds with constants depending on dual bounds and Lipschitz properties, using sensitivity analysis. (2) For mixed-integer second-stage problems, we show that combinatorial structure can be exploited to obtain tight regret bounds. We analyze several examples as illustrations. These results provide theoretical justification for problem-dependent scenario reduction approaches and enable their application to both continuous and discrete stochastic programs.
\end{abstract}

\maketitle

\setcounter{tocdepth}{1}
\tableofcontents

\section{Introduction}

Two-stage stochastic programming provides a fundamental framework for decision-making under uncertainty, with applications spanning operations research, finance, and engineering \cite{birge2011}. The generic formulation seeks to minimize
\begin{equation}\label{eq:two-stage}
\min_{x \in X} \left\{ g(x) + \mathbb{E}_P[Q(x, \xi)] \right\},
\end{equation}
where $g: \mathbb{R}^{n_1} \to \mathbb{R}$ is the first-stage cost function, $x \in \mathbb{R}^{n_1}$ represents first-stage decisions made before uncertainty is realized, $\xi \in \Xi \subseteq \mathbb{R}^d$ denotes the random parameters with distribution $P$, and $Q(x, \xi)$ captures the optimal recourse cost:
\begin{equation}\label{eq:recourse}
Q(x, \xi) = \min_{z \in Z(x, \xi)} c(z; x, \xi),
\end{equation}
where $Z(x, \xi)$ denotes the feasible set of second-stage decisions given first-stage decision $x$ and scenario realization $\xi$.

To illustrate why problem structure matters for scenario reduction, consider a simple inventory problem where $x$ is the order quantity and $\xi$ is the uncertain demand. For scenarios $\xi_1 = 100$ and $\xi_2 = 200$ with symmetric costs, the Euclidean distance treats these scenarios as equally different from a third scenario $\xi_3 = 150$. However, if the optimal order quantity for $\xi_3$ is $x^*(150) = 140$, then using this decision for $\xi_1$ incurs only a small holding cost, while using it for $\xi_2$ incurs a large shortage cost. A problem-dependent cost that captures this asymmetric regret would recognize that $\xi_3$ is a better representative for $\xi_1$ than for $\xi_2$, leading to more effective scenario reduction.

A central challenge in stochastic programming is stability analysis: quantifying how perturbations in the probability distribution $P$ affect the optimal value and solutions. Stability results are the cornerstone of scenario reduction methods in stochastic programming. When solving real-world stochastic programs, the true probability distribution often involves continuous random variables or an intractably large number of scenarios. Scenario reduction techniques \cite{dupacova2003,heitsch2006,rujeerapaiboon2022} approximate the original distribution $P$ by a simpler distribution $Q$ supported on fewer scenarios, making the problem computationally tractable. The quality of this approximation—and hence the reliability of the computed solutions—depends crucially on stability bounds that quantify how the optimal value $v(P)$ changes when $P$ is replaced by $Q$. Without such bounds, there is no guarantee that solving the reduced problem yields meaningful solutions for the original problem.

Classical stability theory \cite{heitsch2006,rachev2002,romisch2003}, establishes that the optimal value is Lipschitz continuous with respect to Wasserstein distances: $|v(P) - v(Q)| \leq L \cdot W_p(P,Q)$, where $W_p$ is the $p$-Wasserstein distance and $L$ is a problem-dependent Lipschitz constant. This fundamental result enables practitioners to control the approximation error by choosing $Q$ to minimize $W_p(P,Q)$ subject to cardinality constraints. However, these classical results have two fundamental limitations:
\begin{enumerate}
\item They require the ground cost in the optimal transport problem to be a distance (metric), which may fail to capture the specific structure of the optimization problem, potentially leading to conservative bounds.
\item They assume convexity and Lipschitz continuity of the value function $Q(x,\xi)$ in both arguments, assumptions that fail for mixed-integer second-stage problems where the value function is discontinuous and non-convex \cite{birge2011,shapiro2009}.
\end{enumerate}

Recently, there has been growing interest in problem-dependent approaches to scenario reduction. Henrion and R\"omisch \cite{henrion2022} use minimal information distances $d_{\mathcal{F}}$ that still maintains stability bounds. Keutchayan et al. \cite{keutchayan2023} proposed problem-driven scenario clustering that exploits optimization structure, while Bertsimas and Mundru \cite{bertsimas2023} introduced problem-dependent ground costs that measure the \emph{decision regret} rather than scenario distance. Computational experiments in \cite{bertsimas2023,keutchayan2023} demonstrated significant improvements in scenario reduction quality. However, in \cite{bertsimas2023} the theoretical analysis contains a gap: classical stability results which require the ground cost to be a metric cannot be used to problem-dependent costs which are not distances. Indeed, when the standard Euclidean ground metric (or norm-based distance) in the Wasserstein distances is replaced by problem-dependent costs, the dual optimal transport formulation no longer corresponds to Fortet-Mourier metrics, breaking the connection to established stability theory. This leaves several theoretical questions unanswered:
\begin{itemize}
\item How can stability bounds be established when the Wasserstein-Fortet-Mourier duality breaks down?
\item Under what conditions do problem-dependent transport costs yield valid stability bounds?
\item What is the precise relationship between problem-dependent costs and classical stability results?
\end{itemize}

This paper addresses these questions by developing a stability theory that works directly with problem-dependent transport formulations, bypassing the need for Fortet-Mourier duality. Our main result shows that if a symmetric problem-dependent ground cost $c: \Xi \times \Xi \to [0,\infty]$ dominates the regret---meaning that for all scenarios $\xi, \xi' \in \Xi$ and some constant $\beta > 0$, we have 
$$Q(x^*(\xi), \xi') - Q(x^*(\xi'), \xi') \leq \beta \cdot c(\xi,\xi'),$$
 where $x^*(\xi)$ denotes the optimal first-stage decision under scenario $\xi$---then the optimal value function is Lipschitz continuous with respect to the induced transport distance $$|v(P) - v(Q)| \leq \beta \cdot \mathcal{T}_c(P,Q).$$ where $\mathcal{T}_c(P,Q) = \inf_{\pi \in \Pi(P,Q)} \int c(\xi,\xi') d\pi(\xi,\xi')$ is the optimal transport cost between distributions $P$ and $Q$. This establishes stability without requiring $c$ to be a metric, thus providing theoretical justification for problem-dependent scenario reduction approaches.

Our stability result is established through a direct proof that works entirely with transport couplings. While the argument is elementary, it enables stability analysis for problem classes—particularly mixed-integer recourse problems—where the technical machinery of classical theory does not apply.

We extend classical stability results in two directions: from distances to general non-negative lower semicontinuous proper costs, and from continuous problems with Lipschitz value functions to discrete problems where such regularity fails. While we focus on two-stage stochastic programs, we note that similar stability questions arise in multistage settings, where Pflug and Pichler \cite{pflugpichler2012} introduced the nested distance for comparing scenario trees. However, extending our problem-dependent cost framework to multistage programs is beyond the scope of this article. Our contributions are:

\begin{enumerate}[label=(\roman*)]
\item We extend classical stability results to non-negative lower semicontinuous proper costs, developing a direct approach that does not rely on the Wasserstein-Fortet-Mourier duality (Theorem~\ref{thm:main-stability}). This provides a rigorous theoretical foundation for approaches like Bertsimas-Mundru that use general costs rather than distances.
\item We identify minimal regularity conditions ensuring that problem-dependent costs provide stability bounds with explicit constants for both continuous and mixed-integer recourse problems.
\item We demonstrate that our approach enables stability analysis for mixed-integer recourse problems where the value function lacks the convexity and Lipschitz continuity required by classical theory. Through concrete examples including capacitated facility location with single-sourcing and stochastic network design, we show how problem-dependent costs can exploit combinatorial structure to achieve regret bounds, instead of depending on conservative Lipschitz estimates.  
\end{enumerate}

The remainder of this paper is organized as follows. Section~\ref{sec:preliminaries} establishes notation and examines the fundamental relationship between optimal transport duality and Fortet-Mourier metrics, explaining why the Bertsimas-Mundru approach faces theoretical challenges. Section~\ref{sec:ground-costs} introduces problem-dependent ground costs and their properties. Section~\ref{sec:main-results} presents our main stability results and develops the necessary concepts of regret and regret domination. Section~\ref{sec:sufficient-conditions} provides sufficient conditions for establishing regret domination: general conditions for arbitrary programs, specialized results for linear programs using sensitivity analysis, and results for mixed-integer programs exploiting combinatorial structure. Section~\ref{sec:conclusion} concludes with a summary, comparison with classical approaches, and future research directions. 

\section{Preliminaries: Notation and Optimal Transport Duality}\label{sec:preliminaries}

We work in finite-dimensional Euclidean spaces with the Euclidean norm $\|\cdot\|$. For a set $S$, we denote by $\mathcal{P}(S)$ the set of Borel probability measures on $S$. The support of a measure $\mu$ is denoted $\text{supp}(\mu)$.

\subsection{Wasserstein Distances and Probability Metrics}

For $p \geq 1$ and probability measures $\mu, \nu \in \mathcal{P}(\Xi)$, the $p$-Wasserstein distance is
\begin{equation}
W_p(\mu, \nu) = \left( \inf_{\pi \in \Pi(\mu, \nu)} \int_{\Xi \times \Xi} \|\xi - \xi'\|^p \, d\pi(\xi, \xi') \right)^{1/p},
\end{equation}
where $\Pi(\mu, \nu)$ denotes the set of couplings (joint distributions) with marginals $\mu$ and $\nu$.

More generally, for a ground cost $c: \Xi \times \Xi \to [0, +\infty]$, the optimal transport cost is
\begin{equation}
\mathcal{T}_c(\mu, \nu) = \inf_{\pi \in \Pi(\mu, \nu)} \int_{\Xi \times \Xi} c(\xi, \xi') \, d\pi(\xi, \xi').
\end{equation}
We assume throughout that $c$ is measurable and lower semicontinuous. When $c$ is proper and satisfies appropriate growth conditions, the infimum is attained.

\subsection{Fortet-Mourier Metrics}

Following the framework of Rachev and R\"omisch \cite{rachev2002}, we introduce the $p$-th order Fortet-Mourier metric $\zeta_p$ ($p \geq 1$):
\begin{equation}
\zeta_p(\mu, \nu) = \sup_{f \in \mathcal{F}_p(\Xi)} \left| \int_{\Xi} f(\xi) (\mu - \nu)(d\xi) \right|,
\end{equation}
where
\begin{equation}
\mathcal{F}_p(\Xi) = \left\{ f: \Xi \to \mathbb{R} : |f(\xi) - f(\xi')| \leq \max\{1, \|\xi\|^{p-1}, \|\xi'\|^{p-1}\} \|\xi - \xi'\| \right\}.
\end{equation}

The Fortet-Mourier metrics provide a bridge between Wasserstein distances and problem-specific metrics, as they metrize weak convergence plus convergence of $p$-th moments.

\subsection{Function Spaces and Regularity}

We denote by $\text{Lip}_L(\Xi)$ the space of Lipschitz functions on $\Xi$ with Lipschitz constant at most $L$. A function $f: \mathbb{R}^n \times \Xi \to \mathbb{R}$ is called jointly Lipschitz if there exists $L > 0$ such that
\begin{equation}
|f(x, \xi) - f(x', \xi')| \leq L(\|x - x'\| + \|\xi - \xi'\|)
\end{equation}
for all $(x, \xi), (x', \xi') \in \mathbb{R}^n \times \Xi$.

\subsection{Stochastic Programming Notation}

Throughout, we consider the two-stage problem \eqref{eq:two-stage} with the following standing assumptions:
\begin{itemize}
\item The first-stage constraint set $X \subseteq \mathbb{R}^{n_1}$ is closed and nonempty.
\item The first-stage cost $g: \mathbb{R}^{n_1} \to \mathbb{R} \cup \{+\infty\}$ is proper, lower semicontinuous, and coercive.
\item For each $(x, \xi) \in X \times \Xi$, the second-stage constraint set $Z(x, \xi)$ is nonempty (relatively complete recourse).
\item The second-stage cost function $c: \mathbb{R}^{n_2} \times \mathbb{R}^{n_1} \times \Xi \to \mathbb{R}$ is measurable in $\xi$ and lower semicontinuous in $(z, x)$.
\item Under these conditions, optimal solutions exist and the solution sets 
\[
S(P) = \argmin_{x \in X} \{g(x) + \mathbb{E}_P[Q(x, \xi)]\}
\]
are nonempty for any probability measure $P$ with finite expectation.
\end{itemize}

For linear second-stage problems of the form 
\[ 
    Q(x,\xi) = \min\{q(\xi)^T y : W(\xi)y = h(\xi) - T(\xi)x, y \geq 0\},
\]

\begin{itemize}
\item \emph{fixed recourse}: The recourse matrix $W$ is deterministic (not dependent on $\xi$),
\item \emph{complete recourse}: The system $Wy = \chi, y \geq 0$ has a solution for every $\chi \in \mathbb{R}^m$. Equivalently, $\text{pos } W = \{Wy : y \geq 0\} = \mathbb{R}^m$,
\item \emph{relatively complete recourse}: For every $x \in X$ and almost every $\xi \in \Xi$, the second-stage problem is feasible, i.e., $Q(x,\xi) < +\infty$ w.p.1.
\end{itemize}

\subsection{Optimal Transport Duality and Fortet-Mourier Metrics}

Understanding why problem-dependent ground costs break established stability theory requires examining the relationship between optimal transport duality and Fortet-Mourier metrics. This subsection provides background for our results.

\subsubsection{General Kantorovich Duality}

For a general cost function $c: \Xi \times \Xi \to [0, +\infty]$ that is measurable and lower semicontinuous, the optimal transport problem between probability measures $\mu$ and $\nu$ is:
\begin{equation}
\mathcal{T}_c(\mu, \nu) = \inf_{\pi \in \Pi(\mu, \nu)} \int_{\Xi \times \Xi} c(\xi, \xi') \, d\pi(\xi, \xi').
\end{equation}

By Kantorovich duality, when $c$ is lower semicontinuous and $\Xi$ is a Polish space, this equals:
\begin{equation}
\mathcal{T}_c(\mu, \nu) = \sup_{\phi, \psi} \left\{ \int_{\Xi} \phi(\xi) \, d\mu(\xi) + \int_{\Xi} \psi(\xi') \, d\nu(\xi') \right\}
\end{equation}
where the supremum is over functions $\phi, \psi$ satisfying $\phi(\xi) + \psi(\xi') \leq c(\xi, \xi')$ for all $\xi, \xi'$.

\emph{Observation}: The dual constraint $\phi(\xi) + \psi(\xi') \leq c(\xi, \xi')$ determines what types of functions appear in the dual formulation. When $c$ is a distance, this leads to Lipschitz constraints and Fortet-Mourier metrics. When $c$ is problem-dependent, this connection breaks down.

\subsubsection{The Distance Case: Wasserstein-Fortet-Mourier Correspondence}

When the cost function is a distance $c(\xi, \xi') = d(\xi, \xi')$, the Kantorovich dual has a special structure that connects directly to Fortet-Mourier metrics.

\begin{theorem}[1-Wasserstein equals 1-Fortet-Mourier]
\label{thm:w1-fm1}
For probability measures $\mu, \nu$ on a separable metric space $(\Xi, d)$,
\begin{equation}
W_1(\mu, \nu) = \zeta_1(\mu, \nu),
\end{equation}
where $W_1$ is the 1-Wasserstein distance and $\zeta_1$ is the first-order Fortet-Mourier metric.
\end{theorem}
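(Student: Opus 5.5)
Since $p=1$ gives $\max\{1,\|\xi\|^{0},\|\xi'\|^{0}\}=1$, the class $\mathcal{F}_1(\Xi)$ is exactly the set of $1$-Lipschitz functions (with $\|\xi-\xi'\|$ read as $d(\xi,\xi')$). The statement is therefore the Kantorovich--Rubinstein duality, and I will prove it as two inequalities. I restrict throughout to $\mu,\nu$ with finite first moments, i.e.\ $\int d(\xi,\xi_0)\,d\mu<\infty$ for some $\xi_0$, and likewise for $\nu$. This makes every $1$-Lipschitz $f$ integrable, since $|f(\xi)|\le |f(\xi_0)|+d(\xi,\xi_0)$. The absolute value in $\zeta_1$ is harmless because $f\in\mathcal{F}_1$ implies $-f\in\mathcal{F}_1$, so the supremum of $|\cdot|$ equals the supremum without it.

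\emph{Easy direction ($\zeta_1\le W_1$).} Fix any $\pi\in\Pi(\mu,\nu)$ and any $f\in\mathcal{F}_1(\Xi)$. By the marginal constraints,
\[
\int f\,d(\mu-\nu)=\int \bigl(f(\xi)-f(\xi')\bigr)\,d\pi(\xi,\xi')\le \int d(\xi,\xi')\,d\pi(\xi,\xi').
\]
Taking the supremum over $f$ and then the infimum over $\pi$ gives $\zeta_1(\mu,\nu)\le W_1(\mu,\nu)$.

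\emph{Hard direction ($W_1\le\zeta_1$).} I invoke the general Kantorovich duality recalled above with $c=d$. The metric is continuous, hence lower semicontinuous, and I work on the completion of $\Xi$ so the space is Polish; I expect this to give no trouble, since both sides are unchanged by passing to the completion. Duality gives
\[
W_1(\mu,\nu)=\sup\Bigl\{\textstyle\int\phi\,d\mu+\int\psi\,d\nu:\ \phi(\xi)+\psi(\xi')\le d(\xi,\xi')\Bigr\}.
\]
Take an admissible pair $(\phi,\psi)$. I improve it by $d$-transforms. First set $\phi^d(\xi')=\inf_{\xi}\{d(\xi,\xi')-\phi(\xi)\}\ge\psi(\xi')$. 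Then set $\tilde f(\xi)=\inf_{\xi'}\{d(\xi,\xi')-\phi^d(\xi')\}\ge\phi(\xi)$. The objective does not decrease and the pair $(\tilde f,\phi^d)$ remains admissible.

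The key structural step is the following. An infimum of the $1$-Lipschitz functions $\xi\mapsto d(\xi,\xi')-a_{\xi'}$ is $1$-Lipschitz wherever it is finite, and finiteness holds because $\phi,\psi$ are finite somewhere. Hence $\tilde f$ is $1$-Lipschitz. For a $1$-Lipschitz $\tilde f$, its transform satisfies $\tilde f^{d}=-\tilde f$: the bound $\le$ comes from choosing $\xi=\xi'$, and $\ge$ follows from the Lipschitz inequality. So the pair can be replaced by $(\tilde f,-\tilde f)$ without decreasing the objective. The dual value is then bounded by $\int\tilde f\,d(\mu-\nu)\le\zeta_1(\mu,\nu)$, and taking the supremum over admissible pairs yields $W_1\le\zeta_1$.

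\emph{Main obstacle.} The delicate point is the double $d$-transform step. I must check that $\phi^d$ and $\tilde f$ are real-valued and measurable; being Lipschitz makes them continuous, hence Borel. I must also check that the improvement is legitimate when $\phi$ or $\psi$ is merely integrable rather than bounded. To handle this I would first restrict the Kantorovich dual to bounded continuous pairs, which the standard duality theorem allows. I would then verify finiteness by noting that $\phi^d(\xi')\le d(\xi_1,\xi')-\phi(\xi_1)<\infty$ for any point $\xi_1$ with $\phi(\xi_1)$ finite. Finally, when the first moments are infinite both quantities can be $+\infty$, and the identity should be read in that extended sense.
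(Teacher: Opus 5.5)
Your proof is correct, but it takes a more self-contained route than the paper. The paper's proof is a one-line reduction. It quotes the Kantorovich--Rubinstein theorem, $W_1(\mu,\nu)=\sup_{f\in\mathrm{Lip}_1(\Xi)}\bigl|\int f\,d(\mu-\nu)\bigr|$, and observes that $\mathcal{F}_1(\Xi)=\mathrm{Lip}_1(\Xi)$. You make the same identification, but then prove Kantorovich--Rubinstein itself:
\begin{itemize}
\item $\zeta_1\le W_1$, by integrating $f(\xi)-f(\xi')\le d(\xi,\xi')$ against an arbitrary coupling;
\item $W_1\le\zeta_1$, from the general Kantorovich duality with $C_b$ potentials, via the double $d$-transform. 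This turns any admissible pair into $(\tilde f,-\tilde f)$ with $\tilde f$ $1$-Lipschitz, without lowering the objective.
\end{itemize}
The paper's version is shorter, but it silently inherits the hypotheses of the cited theorem: finite first moments, and reading $\|\xi-\xi'\|$ as $d(\xi,\xi')$. Yours states them. More to the point of this paper, it shows exactly where the metric axioms are used: $d(\xi,\xi)=0$ and the triangle inequality are what make $d$-transforms $1$-Lipschitz and force $\tilde f^{d}=-\tilde f$. That is precisely the step that fails for non-metric costs such as $c_{BM}$ or $c_S$. So your argument actually substantiates the Observation made after the general Kantorovich duality in Section~\ref{sec:preliminaries}, rather than merely asserting it.

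Two small repairs are needed.

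\emph{Passing to the completion.} Write $\bar\mu,\bar\nu$ for the images of $\mu,\nu$ on the completion $\bar\Xi$. Invariance of $\zeta_1$ is clear, by unique $1$-Lipschitz extension. Pushing couplings forward gives $W_1^{\bar\Xi}(\bar\mu,\bar\nu)\le W_1^{\Xi}(\mu,\nu)$, but your hard direction needs the reverse inequality. If $\Xi$ is not Borel in $\bar\Xi$, a coupling on $\bar\Xi\times\bar\Xi$ need not be carried by $\Xi\times\Xi$. You can fix this in either of two ways:
\begin{itemize}
\item assume $\Xi$ is closed (or Borel) in $\mathbb{R}^d$, as in the paper's setting; or
\item approximate $\mu,\nu$ by finitely supported measures on a countable dense subset of $\Xi$, where the two transport problems coincide, and glue.
\end{itemize}

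\emph{The closing ``extended sense'' remark.} Drop it. If both measures have infinite first moment, $\int f\,d(\mu-\nu)$ is undefined for unbounded $f$; for example $\mu=\nu$ gives $W_1=0$ while $\zeta_1$ is meaningless. So finite first moments are a genuine hypothesis of the statement, implicitly in the paper as well.
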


\begin{proof}

The 1-Wasserstein distance is defined by:
\begin{equation}
W_1(\mu, \nu) = \inf_{\pi \in \Pi(\mu, \nu)} \int_{\Xi \times \Xi} d(\xi, \xi') \, d\pi(\xi, \xi'),
\end{equation}
where $\Pi(\mu, \nu)$ is the set of couplings with marginals $\mu$ and $\nu$.

By the Kantorovich-Rubinstein theorem:
\begin{equation}
W_1(\mu, \nu) = \sup_{f \in \text{Lip}_1(\Xi)} \left| \int_{\Xi} f(\xi) (\mu - \nu)(d\xi) \right|,
\end{equation}
where $\text{Lip}_1(\Xi) = \{f: \Xi \to \mathbb{R} : |f(\xi) - f(\xi')| \leq d(\xi, \xi') \text{ for all } \xi, \xi'\}$.

The first-order Fortet-Mourier metric is:
\begin{equation}
\zeta_1(\mu, \nu) = \sup_{f \in \mathcal{F}_1(\Xi)} \left| \int_{\Xi} f(\xi) (\mu - \nu)(d\xi) \right|,
\end{equation}

where $\mathcal{F}_1(\Xi) = \{f: \Xi \to \mathbb{R} : |f(\xi) - f(\xi')| \leq 1 \cdot \|\xi - \xi'\| \}$.
Notice that $\mathcal{F}_1(\Xi)$ is exactly $\text{Lip}_1(\Xi)$. The equality follows immediately. 
\end{proof}

% This result shows the connection between optimal transport duality and classical stability theory.

\begin{lemma}[Wasserstein Distance Ordering]
\label{lem:wasserstein-ordering}
For probability measures $\mu, \nu$ on a metric space $(\Xi, d)$ and $1 \leq p < q$, we have
\begin{equation}
W_p(\mu, \nu) \leq W_q(\mu, \nu).
\end{equation}
\end{lemma}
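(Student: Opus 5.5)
The plan is to compare the two transport problems coupling by coupling and then apply Jensen's (equivalently, Lyapunov's) inequality for moments under a fixed probability measure. Fix $\mu,\nu\in\mathcal{P}(\Xi)$ and $1\le p<q$. If $W_q(\mu,\nu)=+\infty$ there is nothing to prove, so assume it is finite. Then for every coupling $\pi\in\Pi(\mu,\nu)$ I will show
\[
\left(\int_{\Xi\times\Xi} d(\xi,\xi')^p\,d\pi(\xi,\xi')\right)^{1/p}\le \left(\int_{\Xi\times\Xi} d(\xi,\xi')^q\,d\pi(\xi,\xi')\right)^{1/q}.
\]
Since $\pi$ is a probability measure, this is the monotonicity of $L^r(\pi)$ norms in $r$, applied to the nonnegative measurable function $f=d$.

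To justify the displayed inequality, set $r=q/p>1$. The map $t\mapsto t^{r}$ is convex on $[0,\infty)$, so Jensen's inequality gives
\[
\left(\int f^p\,d\pi\right)^{r}\le \int f^{pr}\,d\pi=\int f^q\,d\pi.
\]
Taking $q$-th roots gives the claim. For the case $\int f^p\,d\pi=\infty$ I will argue directly: it would force $\int f^q\,d\pi=\infty$, since $f^p\le 1+f^q$ pointwise and $\pi$ has finite mass. Alternatively, one can apply Hölder's inequality with exponents $r$ and $r'$ to $f^p\cdot 1$, which handles both cases at once.

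Finally, I take the infimum over $\pi\in\Pi(\mu,\nu)$. Because $s\mapsto s^{1/p}$ and $s\mapsto s^{1/q}$ are increasing and continuous on $[0,\infty]$, the infimum commutes with them. So $W_p(\mu,\nu)=\inf_\pi \|d\|_{L^p(\pi)}$ and $W_q(\mu,\nu)=\inf_\pi\|d\|_{L^q(\pi)}$. The coupling-wise inequality $\|d\|_{L^p(\pi)}\le\|d\|_{L^q(\pi)}$ then passes to the infima, which gives $W_p\le W_q$.

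The only point needing care is this commuting of the infimum with the root, together with the convention for infinite values. No attainment of optimal couplings is needed, so the argument works on any metric space with Borel-measurable $d$, and it does not use Theorem~\ref{thm:w1-fm1}.
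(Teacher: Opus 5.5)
Your proof is correct and uses the same core idea as the paper: Jensen's inequality (Lyapunov's inequality for $L^r(\pi)$ norms) applied under a single coupling. The only difference is that the paper fixes an optimal coupling $\pi^*$ for $W_q(\mu,\nu)$, which need not exist on an arbitrary metric space, whereas you prove the inequality for every $\pi\in\Pi(\mu,\nu)$ and then pass to the infimum, so you do not need attainment and you also treat the case $W_q(\mu,\nu)=+\infty$ explicitly.
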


\begin{proof}
Let $\pi^*$ be an optimal coupling for $W_q(\mu, \nu)$. We use Jensen's inequality: since $t \mapsto t^{p/q}$ is concave on $[0, \infty)$ when $p < q$:
\begin{align}
W_p(\mu, \nu)^p &= \inf_{\pi \in \Pi(\mu, \nu)} \int_{\Xi \times \Xi} d(\xi, \xi')^p \, d\pi(\xi, \xi') \\
&\leq \int_{\Xi \times \Xi} d(\xi, \xi')^p \, d\pi^*(\xi, \xi') \\
&= \int_{\Xi \times \Xi} \left(d(\xi, \xi')^q\right)^{p/q} \, d\pi^*(\xi, \xi') \\
&\leq \left(\int_{\Xi \times \Xi} d(\xi, \xi')^q \, d\pi^*(\xi, \xi')\right)^{p/q} \quad \text{(Jensen's inequality)}\\
&= W_q(\mu, \nu)^p.
\end{align}
Taking $p$-th roots gives $W_p(\mu, \nu) \leq W_q(\mu, \nu)$.
\end{proof}

\begin{corollary}[Classical Stability Bounds via Wasserstein Distances]
\label{cor:classical-bounds}
Consider a two-stage stochastic program satisfying the conditions of Rachev-R\"omisch \cite{rachev2002} Theorem 3.3 (relatively complete recourse, dual feasibility, finite second moments). Then there exists a constant $L > 0$ such that for any $p \geq 1$:
\begin{equation}
|v(P) - v(\nu)| \leq L \cdot W_p(P, \nu),
\end{equation}
where $v(P)$ and $v(\nu)$ are the optimal values under distributions $P$ and $\nu$ respectively.
\end{corollary}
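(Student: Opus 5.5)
The plan is to take the Rachev--R\"omisch result \cite{rachev2002} (Theorem 3.3) as the source of the Lipschitz estimate in the weakest metric, and then lift it to every $W_p$ using Theorem~\ref{thm:w1-fm1} and Lemma~\ref{lem:wasserstein-ordering}. In its basic form, that theorem says the following. Under relatively complete recourse, dual feasibility and finite moments, the recourse function $\xi \mapsto Q(x,\xi)$ belongs, up to a constant $L_0$ independent of $x$ in a bounded neighbourhood of the solution set, to the class $\mathcal{F}_1(\Xi)$. This holds in the case where only the right-hand side $h(\xi)$, $T(\xi)$ or $q(\xi)$ depends affinely on $\xi$, so that the growth is linear. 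From this one obtains
\[
|v(P)-v(\nu)| \le L_0\,\zeta_1(P,\nu).
\]
The first step is therefore to fix the relevant neighbourhood and constant. The argument I would record is the standard one. For $x$ near $S(P)$, we have $\sup_x |\E_P Q(x,\xi) - \E_\nu Q(x,\xi)| \le L_0 \zeta_1(P,\nu)$, because $Q(x,\cdot)/L_0 \in \mathcal{F}_1$. Taking infima over $x$ of $g(x)+\E[Q(x,\xi)]$ on both sides then preserves this uniform bound.

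The second step is to convert the Fortet--Mourier bound into a Wasserstein bound. By Theorem~\ref{thm:w1-fm1}, $\zeta_1(P,\nu)=W_1(P,\nu)$, which gives $|v(P)-v(\nu)|\le L_0 W_1(P,\nu)$. By Lemma~\ref{lem:wasserstein-ordering}, $W_1 \le W_p$ for every $p\ge 1$. Setting $L=L_0$ therefore yields the claim simultaneously for all $p$, with $L$ independent of $p$.

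I expect the main obstacle to be the first step, namely matching the hypotheses precisely to the $\mathcal{F}_1$ case. If the data enter bilinearly, for example both $q(\xi)$ and $h(\xi)$ random, then $Q(x,\cdot)$ only lies in a multiple of $\mathcal{F}_2$. The Rachev--R\"omisch bound is then in $\zeta_2$, and $\zeta_2$ is not dominated by $W_p$ for $p<2$. In that case I would first try to restrict the statement to the setting where $Q(x,\cdot)$ is Lipschitz, which is the setting the corollary's ``dual feasibility'' implicitly assumes: a uniformly bounded dual feasible set gives $|Q(x,\xi)-Q(x,\xi')|\le \sup_{\pi\in D}\|\pi\|\,\|h(\xi)-h(\xi')-(T(\xi)-T(\xi'))x\|$. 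I would then absorb $\sup_{x}\|x\|$ over the bounded neighbourhood into $L$. With that in place, the rest is purely the chain $\zeta_1=W_1\le W_p$.
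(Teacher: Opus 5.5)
Your proposal is exactly the paper's argument: a Rachev--R\"omisch estimate $|v(P)-v(\nu)|\le L\,\zeta_1(P,\nu)$, then $\zeta_1=W_1$ by Theorem~\ref{thm:w1-fm1}, then $W_1\le W_p$ by Lemma~\ref{lem:wasserstein-ordering}, with one constant for all $p$. Your caveat on the first step is well founded, and it is what the paper's one-line citation passes over: with finite second moments and both $q(\xi)$ and $(h(\xi),T(\xi))$ random, the classical estimate is only in $\zeta_2$ (matching the factor $\max\{1,\|\xi\|,\|\xi'\|\}$ in Proposition~\ref{prop:romisch-22}), and then the claimed bound can fail for every $p$ (already for Dirac measures, $W_p(\delta_\xi,\delta_{\xi'})=\|\xi-\xi'\|$ while $v(\delta_\xi)$ may grow quadratically in $\xi$), so restricting to the Lipschitz (order-one) setting, as you propose, is genuinely necessary.
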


\begin{proof}
By classical stability theory for two-stage stochastic programs (e.g., Rachev-R\"omisch \cite{rachev2002}), there exists a constant $L > 0$ such that $|v(P) - v(\nu)| \leq L \cdot \zeta_1(P, \nu)$ where $\zeta_1$ is the first-order Fortet-Mourier metric. By Theorem~\ref{thm:w1-fm1}, we have $\zeta_1(P, \nu) = W_1(P, \nu)$. Finally, by Lemma~\ref{lem:wasserstein-ordering}, $W_1(P, \nu) \leq W_p(P, \nu)$ for any $p \geq 1$. Combining these inequalities gives the result.
\end{proof}

This corollary shows that whenever classical stability theory applies (through Fortet-Mourier metrics), we obtain stability bounds in terms of any $p$-Wasserstein distance with $p \geq 1$. The proof demonstrates the clear chain: optimal value differences are bounded by the first-order Fortet-Mourier metric, which equals the 1-Wasserstein distance, which is in turn dominated by higher-order Wasserstein distances. However, this approach requires the Wasserstein-Fortet-Mourier correspondence, which fails for problem-dependent ground costs. The direct approach (Theorem~\ref{thm:main-stability}) provides stability bounds for problem-dependent transport without relying on this classical correspondence.

\subsubsection{The Problem-Dependent Case: Breaking the Duality}

When the cost function $c(\xi, \xi')$ is problem-dependent (such as the Bertsimas-Mundru divergence), the Kantorovich dual formulation no longer corresponds to any Fortet-Mourier metric.

For a lower semicontinuous ground cost $c: \Xi \times \Xi \to [0, +\infty]$, the Kantorovich dual formulation is:
\begin{equation}
\mathcal{T}_c(P, \nu) = \sup_{(\phi, \psi) \in \Phi_c} \left\{ \int_\Xi \phi \, dP + \int_\Xi \psi \, d\nu \right\},
\end{equation}
where $\Phi_c = \{(\phi, \psi) \in C_b(\Xi) \times C_b(\Xi) : \phi(\xi) + \psi(\xi') \leq c(\xi, \xi') \text{ for all } \xi, \xi' \in \Xi\}$ and $C_b(\Xi)$ denotes the space of continuous bounded functions on $\Xi$.

Consider the Bertsimas-Mundru cost $c(\xi, \xi') = c_{BM}(\xi, \xi') = Q(x^*(\xi'), \xi) - Q(x^*(\xi), \xi) + \alpha \|\xi - \xi'\|$. The Kantorovich dual constraint becomes:
\begin{equation}
\phi(\xi) + \psi(\xi') \leq Q(x^*(\xi'), \xi) - Q(x^*(\xi), \xi) + \alpha \|\xi - \xi'\|.
\end{equation}

\emph{Key observation}: This constraint depends on the optimization problem structure through $Q$ and $x^*$, not just on the geometry of the space $\Xi$. Therefore:

\begin{enumerate}
\item The admissible functions $\phi, \psi$ in the dual are determined by optimization properties, not Lipschitz conditions.
\item The resulting dual formulation cannot be expressed as a Fortet-Mourier metric of the form $\sup_{f \in \mathcal{F}} |\int f \, d(\mu - \nu)|$.
\item Classical stability theory, which relies on the Wasserstein-Fortet-Mourier correspondence, no longer applies.
\end{enumerate}

This explains why the Bertsimas-Mundru approach requires additional theoretical development: while their computational results are excellent, the lack of connection to Fortet-Mourier metrics means that classical stability theory does not directly apply. Theorem~\ref{thm:main-stability} provides the necessary stability bounds through a direct approach that bypasses the need for Fortet-Mourier duality.

\section{Problem-Dependent Ground Costs}\label{sec:ground-costs}

\subsection{General Framework}

Following the method of probability metrics introduced by Rachev and R\"omisch \cite{rachev2002}, we develop problem-dependent ground costs that capture the structure of stochastic programs. The observation is that the stability of stochastic programs depends on the behavior of the objective function, not just the distance between scenarios.

\begin{definition}[Problem-Dependent Ground Cost]
\label{def:pd-cost}
Given a two-stage stochastic program with second-stage value function $Q(x, \xi)$, a \emph{problem-dependent ground cost} is a function $c: \Xi \times \Xi \to [0, +\infty]$ that measures the dissimilarity between scenarios from the perspective of the optimization problem. We require:
\begin{enumerate}[label=(\alph*)]
\item \emph{Measurability}: $c$ is measurable with respect to the product $\sigma$-algebra on $\Xi \times \Xi$.
\item \emph{Lower semicontinuity}: $c$ is lower semicontinuous.
\item \emph{Properness}: $c$ is proper, i.e., not identically infinite. Formally, there exists at least one pair $(\xi, \xi') \in \Xi \times \Xi$ such that $c(\xi, \xi') < +\infty$.
\end{enumerate}
These conditions ensure that the optimal transport problem $\mathcal{T}_c(\mu, \nu)$ is well-defined. For asymmetric costs, we define the symmetrized version: $c_S(\xi, \xi') = \frac{1}{2}[c(\xi, \xi') + c(\xi', \xi)]$.
\end{definition}

\subsection{Examples of Problem-Dependent Costs}

\begin{example}[Bertsimas-Mundru Cost]
The Bertsimas-Mundru problem-dependent ground cost \cite{bertsimas2023} measures the suboptimality of using the wrong decision. The complete formulation includes a regularization term:
\begin{equation}
c_{BM}(\xi, \xi') = Q(x^*(\xi'), \xi) - Q(x^*(\xi), \xi) + \alpha \|\xi - \xi'\|,
\end{equation}
where $x^*(\xi) \in \argmin_{x \in X} \{g(x) + Q(x, \xi)\}$ is the optimal first-stage decision when the scenario is $\xi$, and $\alpha > 0$ is a regularization parameter.

This cost combines two components:
\begin{itemize}
\item the decision regret $Q(x^*(\xi'), \xi) - Q(x^*(\xi), \xi)$, measuring the suboptimality of using the wrong decision,
\item the regularization term: $\alpha \|\xi - \xi'\|$, which ensures the cost is positive when $\xi \neq \xi'$ and provides Lipschitz continuity.
\end{itemize}

For the symmetric version used in their optimal transport formulation:
\begin{equation}
c_S(\xi, \xi') = \frac{1}{2}\left[c_{BM}(\xi, \xi') + c_{BM}(\xi', \xi)\right]
\end{equation}

Expanding the definition:
\begin{align}
c_S(\xi, \xi') = \frac{1}{2}\bigg[&\left(Q(x^*(\xi'), \xi) - Q(x^*(\xi), \xi) + \alpha \|\xi - \xi'\|\right) \\
&+ \left(Q(x^*(\xi), \xi') - Q(x^*(\xi'), \xi') + \alpha \|\xi' - \xi\|\right)\bigg].
\end{align}

Since $\|\xi - \xi'\| = \|\xi' - \xi\|$, this simplifies to:
\begin{equation}
c_S(\xi, \xi') = \frac{1}{2}\left[Q(x^*(\xi'), \xi) - Q(x^*(\xi), \xi) + Q(x^*(\xi), \xi') - Q(x^*(\xi'), \xi')\right] + \alpha \|\xi - \xi'\|.
\end{equation}

\emph{Key Properties}:
\begin{itemize}
\item $c_{BM}(\xi, \xi') \geq \alpha \|\xi - \xi'\| \geq 0$ (non-negativity).
\item $c_{BM}(\xi, \xi) = 0$ and $c_{BM}(\xi, \xi') = 0 \Rightarrow \xi = \xi'$.
\item Generally asymmetric: $c_{BM}(\xi, \xi') \neq c_{BM}(\xi', \xi)$ unless 
\[ 
Q(x^*(\xi'), \xi) - Q(x^*(\xi), \xi) = Q(x^*(\xi), \xi') - Q(x^*(\xi'), \xi').
\]
\end{itemize}
The triangle inequality may not be satisfied by $c_{BM}$ or its symmetrized counterpart $c_S$, so they are not distances. Thus, classical stability results relying on the Wasserstein-Fortet-Mourier duality breaks down for $c_{BM}$ or $c_S$. 
\end{example}

\begin{example}[Inventory Management]
Consider a newsvendor problem where $x \in \mathbb{R}_+$ represents the order quantity and $\xi \in \mathbb{R}_+$ is the random demand:
\begin{align}
g(x) &= cx \quad \text{(ordering cost)}, \\
Q(x, \xi) &= h \max\{x - \xi, 0\} + p \max\{\xi - x, 0\} \quad \text{(holding + penalty costs)}.
\end{align}

For scenarios $\xi_1 = 10$ and $\xi_2 = 20$, suppose the optimal decisions are $x^*(10) = 12$ and $x^*(20) = 18$. Then:
\begin{align}
c_{BM}(10, 20) &= Q(18, 10) - Q(12, 10) \\
&= [h \cdot 8 + p \cdot 0] - [h \cdot 2 + p \cdot 0] \\
&= 6h.
\end{align}

This captures the extra holding cost incurred by over-ordering when demand is low. This example illustrates how $c_{BM}$ captures the economic cost of suboptimal decisions rather than mere scenario dissimilarity.
\end{example}

\begin{example}[Average Regret Cost]
For a finite set of representative decisions $\{x_1, \ldots, x_K\} \subset X$:
\begin{equation}
c_{avg}(\xi, \xi') = \frac{1}{K} \sum_{k=1}^K |Q(x_k, \xi) - Q(x_k, \xi')|
\end{equation}
This measures the average change in objective value across representative decisions. The cost satisfies Definition~\ref{def:pd-cost} under standard continuity assumptions.
\end{example}

\begin{example}[Composite Problem-Dependent Cost]
\label{ex:composite-cost}
The framework enables combining multiple problem-dependent components to exploit different aspects of the problem structure. Consider the composite cost:
\begin{equation}
c_{comp}(\xi, \xi') = \alpha \|\xi - \xi'\| + \beta \|x^*(\xi) - x^*(\xi')\| + \gamma c_{BM}(\xi, \xi'),
\end{equation}
where:
\begin{itemize}
\item $\alpha \|\xi - \xi'\|$: baseline scenario distance (ensures metric properties),
\item $\beta \|x^*(\xi) - x^*(\xi')\|$: decision stability component,
\item $\gamma c_{BM}(\xi, \xi')$: Bertsimas-Mundru regret component,
\end{itemize}
with weights $\alpha, \beta, \gamma \geq 0$ and $\alpha > 0$.

This composite cost satisfies Definition~\ref{def:pd-cost} and allows practitioners to:
\begin{enumerate}
\item weight different aspects of problem structure according to their importance,
\item incorporate multiple sources of information (noise space, decision space, value space),
\item adapt to specific problem characteristics by tuning the weights.
\end{enumerate}
\end{example}

% The composite cost example shows how our approach enables flexible ground cost design.

\section{Main Stability Result: Regret Domination implies Stability}\label{sec:main-results}

We now establish stability results for two-stage stochastic programs using problem-dependent costs. We introduce the concept of \emph{regret} and \emph{regret domination} which relates problem-dependent costs to the worst-case behavior of the value function.

\subsection{Regret and Regret Domination}

\begin{definition}[Regret]
The regret from scenario $\xi'$ to scenario $\xi$ is defined as:
\begin{equation}
R(\xi, \xi') := \sup_{x \in X} [Q(x, \xi) - Q(x, \xi')].
\end{equation}
This measures the worst-case increase in second-stage cost when the scenario changes from $\xi'$ to $\xi$.
\end{definition}

The key property we need is that the problem-dependent cost controls the worst-case regret:

\begin{assumption}[Regret Domination]
\label{ass:regret-dom}
Let $c: \Xi \times \Xi \to [0, +\infty]$ be a problem-dependent ground cost. There exists a constant $\beta > 0$ such that for all $\xi, \xi' \in \Xi$:
\begin{equation}
R(\xi, \xi') \leq \beta \cdot c(\xi, \xi').
\end{equation}
\end{assumption}

Regret domination means that the regret over all decisions is controlled by the chosen problem-dependent ground cost. This is reasonable when the cost $c$ captures the structure of how the value function varies with scenarios. For the Bertsimas-Mundru cost, this means that the regret between optimal decisions bounds the worst-case regret. Note that both $R$ and $c$ may be asymmetric.

\subsection{Main Stability Theorem}

The key insight is that stability can be achieved directly through the transport formulation without requiring the classical Wasserstein-Fortet-Mourier duality.

\begin{theorem}[Direct Stability for Problem-Dependent Transport]
\label{thm:main-stability}
Consider two-stage stochastic programs:
\begin{align}
v(P) &= \min_{x \in X} \left\{ g(x) + \E_P[Q(x, \xi)] \right\}, \\
v(\nu) &= \min_{x \in X} \left\{ g(x) + \E_\nu[Q(x, \xi)] \right\}.
\end{align}

We assume
\begin{enumerate}[label=(\roman*)]
\item \emph{well-posed two-stage program}: for each $P \in \mathcal{P}(\Xi)$, the problem $\min_{x \in X} \{g(x) + \mathbb{E}_P[Q(x, \xi)]\}$ has an optimal solution,
\item \emph{integrability}: for each $x \in X$, the function $\xi \mapsto Q(x, \xi)$ is measurable and $\mathbb{E}_P[|Q(x, \xi)|] < \infty$ for all relevant probability distributions $P$,
\item \emph{regret domination}: there exist a function $c: \Xi \times \Xi \to [0, +\infty]$ and a constant $\beta > 0$ such that 
\begin{equation*}
R(\xi, \xi') \leq \beta \cdot c(\xi, \xi') \quad \text{for all } \xi, \xi' \in \Xi,
\end{equation*}
where $R(\xi, \xi') = \sup_{x \in X} [Q(x, \xi) - Q(x, \xi')]$.
\end{enumerate}

Then, we have that
\begin{equation}
v(P) - v(\nu) \leq \beta \cdot \mathcal{T}_c(P, \nu) \quad \text{and} \quad v(\nu) - v(P) \leq \beta \cdot \mathcal{T}_c(\nu, P).
\end{equation}

Therefore, we have that
\begin{equation}
\abs{v(P) - v(\nu)} \leq \beta \cdot \max\{\mathcal{T}_c(P, \nu), \mathcal{T}_c(\nu, P)\}.
\end{equation}
\end{theorem}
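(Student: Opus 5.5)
The plan is to work entirely with couplings, as the statement suggests. Fix an optimal first-stage decision $x_\nu$ for the problem under $\nu$; it exists by assumption (i). Since $v(P)$ is a minimum over $X$, we have $v(P) \le g(x_\nu) + \E_P[Q(x_\nu,\xi)]$. Subtracting $v(\nu) = g(x_\nu) + \E_\nu[Q(x_\nu,\xi')]$ cancels the first-stage cost $g(x_\nu)$, leaving
\begin{equation*}
v(P) - v(\nu) \le \E_P[Q(x_\nu,\xi)] - \E_\nu[Q(x_\nu,\xi')].
\end{equation*}
Both expectations are finite by assumption (ii).

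Next, take an arbitrary coupling $\pi \in \Pi(P,\nu)$. Because the marginals of $\pi$ are $P$ and $\nu$, the right-hand side equals
\begin{equation*}
\int_{\Xi\times\Xi} \bigl[Q(x_\nu,\xi) - Q(x_\nu,\xi')\bigr]\, d\pi(\xi,\xi').
\end{equation*}
This step uses integrability to split and recombine the integrals legitimately. Pointwise, the integrand satisfies
\begin{equation*}
Q(x_\nu,\xi) - Q(x_\nu,\xi') \le \sup_{x\in X}\,[Q(x,\xi)-Q(x,\xi')] = R(\xi,\xi') \le \beta\, c(\xi,\xi')
\end{equation*}
by assumption (iii). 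Integrating against $\pi$ gives $v(P)-v(\nu) \le \beta \int c\, d\pi$. Taking the infimum over $\pi \in \Pi(P,\nu)$ yields $v(P)-v(\nu) \le \beta\,\mathcal{T}_c(P,\nu)$. No attainment of the transport infimum is needed, so the lower semicontinuity of $c$ plays no role here.

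The reverse inequality follows by swapping the roles of $P$ and $\nu$. Fix an optimal $x_P$, bound $v(\nu) \le g(x_P) + \E_\nu[Q(x_P,\cdot)]$, and couple with $\pi \in \Pi(\nu,P)$, placing $\xi \sim \nu$ in the first slot. This gives $v(\nu)-v(P) \le \beta\,\mathcal{T}_c(\nu,P)$. The orientation matters because $R$ and $c$ may be asymmetric, so the first argument of $c$ must always be the variable distributed according to the measure whose value is being bounded from above. Combining the two one-sided bounds gives the absolute-value bound with the maximum of the two transport costs.

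The only delicate point is measure-theoretic: the integral of $c$ against $\pi$ may be $+\infty$, and the integrand $R$ need not be measurable. I would avoid both issues by never integrating $R$ itself. Instead I integrate the measurable function $(\xi,\xi') \mapsto Q(x_\nu,\xi)-Q(x_\nu,\xi')$, which is $\pi$-integrable, and compare it pointwise with the measurable function $\beta c$. If $\int c\,d\pi = +\infty$, the inequality holds trivially.
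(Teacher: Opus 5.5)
Your proof is correct and follows the paper's argument: fix an optimizer $x_\nu$ for $\nu$, bound $v(P)-v(\nu)$ by $\E_P[Q(x_\nu,\xi)]-\E_\nu[Q(x_\nu,\xi')]$, write this as an integral against an arbitrary $\pi\in\Pi(P,\nu)$, dominate the integrand pointwise by $\beta c$ via the regret, take the infimum, and then swap roles with the orientation kept correct. One difference is a small tightening over the paper: you compare the measurable integrand $Q(x_\nu,\xi)-Q(x_\nu,\xi')$ directly with $\beta c$, whereas the paper integrates $R$, which need not be measurable.
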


\begin{proof}
Let $x_\nu^* \in \argmin_{x \in X} \{g(x) + \mathbb{E}_\nu[Q(x, \xi)]\}$ be optimal for distribution $\nu$. By optimality of $x_\nu^*$, we have
\begin{align}
v(P) - v(\nu) &\leq [g(x_\nu^*) + \mathbb{E}_P[Q(x_\nu^*, \xi)]] - [g(x_\nu^*) + \mathbb{E}_\nu[Q(x_\nu^*, \xi)]] \\
&= \mathbb{E}_P[Q(x_\nu^*, \xi)] - \mathbb{E}_\nu[Q(x_\nu^*, \xi)].
\end{align}

For any coupling $\pi \in \Pi(P, \nu)$:
\begin{align}
\mathbb{E}_P[Q(x_\nu^*, \xi)] - \mathbb{E}_\nu[Q(x_\nu^*, \xi)] &= \int_{\Xi \times \Xi} [Q(x_\nu^*, \xi) - Q(x_\nu^*, \xi')] \, d\pi(\xi, \xi') \\
&\leq \int_{\Xi \times \Xi} R(\xi, \xi') \, d\pi(\xi, \xi') \\
&\leq \beta \int_{\Xi \times \Xi} c(\xi, \xi') \, d\pi(\xi, \xi'),
\end{align}
where the last inequality uses regret domination.

Taking the infimum over all couplings yields $v(P) - v(\nu) \leq \beta \cdot \mathcal{T}_c(P, \nu)$. By symmetry, $v(\nu) - v(P) \leq \beta \cdot \mathcal{T}_c(\nu, P)$.
\end{proof}

% Standard conditions from stochastic programming ensure well-posedness (see text for details).

\begin{corollary}[Stability with Symmetric Costs]
\label{cor:symmetric-costs}
Under the same assumptions as Theorem~\ref{thm:main-stability}, if the ground cost $c$ is symmetric (i.e., $c(\xi, \xi') = c(\xi', \xi)$ for all $\xi, \xi' \in \Xi$), then:
\begin{equation}
|v(P) - v(\nu)| \leq \beta \cdot \mathcal{T}_c(P, \nu).
\end{equation}
\end{corollary}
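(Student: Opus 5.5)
The plan is to derive the corollary directly from Theorem~\ref{thm:main-stability}. The only new ingredient needed is that, for a symmetric ground cost, the optimal transport cost is itself symmetric in its two measure arguments: $\mathcal{T}_c(\nu, P) = \mathcal{T}_c(P, \nu)$. Once this holds, the two one-sided bounds of Theorem~\ref{thm:main-stability} collapse into the stated absolute-value bound.

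To prove the symmetry of $\mathcal{T}_c$, I would use the swap map $\sigma(\xi, \xi') = (\xi', \xi)$ on $\Xi \times \Xi$. This map is a Borel homeomorphism and an involution. For any coupling $\pi \in \Pi(P, \nu)$, the pushforward $\sigma_\#\pi$ has first marginal $\nu$ and second marginal $P$, so $\sigma_\#\pi \in \Pi(\nu, P)$. Since $\sigma$ is an involution, $\pi \mapsto \sigma_\#\pi$ is a bijection from $\Pi(P,\nu)$ onto $\Pi(\nu,P)$. By the change-of-variables formula for pushforwards, valid for nonnegative measurable integrands with values in $[0,+\infty]$, and by the symmetry of $c$,
\begin{equation*}
\int c \, d(\sigma_\#\pi) = \int c(\xi', \xi)\, d\pi(\xi, \xi') = \int c(\xi, \xi') \, d\pi(\xi, \xi').
\end{equation*}
Taking infima over the two coupling sets, which are in bijection, gives $\mathcal{T}_c(\nu, P) = \mathcal{T}_c(P, \nu)$. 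The argument is also correct when both sides equal $+\infty$.

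With this in hand, Theorem~\ref{thm:main-stability} yields $v(P) - v(\nu) \le \beta\, \mathcal{T}_c(P,\nu)$ and $v(\nu) - v(P) \le \beta\, \mathcal{T}_c(\nu,P) = \beta\, \mathcal{T}_c(P,\nu)$. Combining the two inequalities gives $|v(P) - v(\nu)| \le \beta\, \mathcal{T}_c(P,\nu)$.

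There is no real obstacle here. The only point requiring care is the measure-theoretic justification of the pushforward identity, specifically that $c \circ \sigma = c$ is measurable and that the integrals of $[0,+\infty]$-valued functions are well defined. Both facts follow from the measurability assumption on $c$ in Definition~\ref{def:pd-cost} and the continuity of $\sigma$.
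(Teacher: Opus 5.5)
Your proposal is correct and matches the paper's proof: both rest on the identity $\mathcal{T}_c(P,\nu)=\mathcal{T}_c(\nu,P)$ for symmetric $c$, then combine the two one-sided bounds of Theorem~\ref{thm:main-stability}. Your swap-map pushforward argument spells out the invariance that the paper states in a single line.
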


\begin{proof}
For symmetric costs, $\mathcal{T}_c(P, \nu) = \mathcal{T}_c(\nu, P)$ since the optimal transport problem is invariant under swapping the roles of $P$ and $\nu$ when the cost is symmetric. Therefore, we have that
\begin{equation}
\max\{\mathcal{T}_c(P, \nu), \mathcal{T}_c(\nu, P)\} = \mathcal{T}_c(P, \nu).
\end{equation}
The result follows immediately from Theorem~\ref{thm:main-stability}.
\end{proof}

\section{Sufficient Conditions for Regret Domination}\label{sec:sufficient-conditions}

While Section~\ref{sec:main-results} established that regret domination by a given problem-dependent ground cost implies stability by the associated transport cost. This section provides sufficient conditions under which regret domination holds. We present a hierarchy of results: the classical case of linear programs with costs derived from a norm, problem dependant bound for linear programs exploiting duality and results for mixed-integer programs exploiting either their LP relaxation or their combinatorial structure. We also give a stability result for the problem dependent ground cost proposed in \cite{bertsimas2023}.

\subsection{General Condition: Regret Domination from Classical Lipschitz Bounds}

A fundamental result in stochastic programming stability theory establishes Lipschitz properties of the value function. We recall without proof the following result from R\"omisch \cite{romisch2003}.

\begin{proposition}[R\"omisch \cite{romisch2003}, Proposition 22]
\label{prop:romisch-22}
Consider a two-stage stochastic linear program with fixed recourse:
\begin{equation}
Q(x, \xi) = \min\{q(\xi)^T y : Wy = h(\xi) - T(\xi)x, y \geq 0\}.
\end{equation}

Assume that:
\begin{itemize}
\item[(A1)] For each $(x,\xi) \in X \times \Xi$, the system is feasible (i.e., $h(\xi) - T(\xi)x \in \text{pos } W$) and the dual feasible region is nonempty.
\item[(A2)] The uncertainty set $\Xi$ is a polyhedral subset of $\mathbb{R}^s$.
\item[(A3)] The functions $q(\cdot)$, $h(\cdot)$, and $T(\cdot)$ depend affine linearly on $\xi$.
\end{itemize}

Then the value function $Q$ satisfies: there exist constants $L > 0$, $r > 0$ such that for all $\xi, \xi' \in \Xi$ and $x \in X$ with $\|x\| \leq r$:
\begin{align}
|Q(x, \xi) - Q(x, \xi')| &\leq L \cdot r \cdot \max\{1, \|\xi\|, \|\xi'\|\} \cdot \|\xi - \xi'\|, \label{eq:romisch-lip-xi}.
\end{align}
\end{proposition}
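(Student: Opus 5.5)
The plan is to combine LP duality with a piecewise-affine description of the dual feasible set. By (A1), the primal and the dual are both feasible, so strong duality gives
\[
Q(x,\xi) = \max\{ (h(\xi) - T(\xi)x)^T u : W^T u \le q(\xi)\}.
\]
Because $W$ is fixed, the dual feasible set $D(\xi)=\{u : W^T u \le q(\xi)\}$ is a polyhedron with a fixed constraint matrix. Only its right-hand side $q(\xi)$ moves, and by (A3) it moves affinely in $\xi$. I will use the standard fact (Walkup--Wets) that $D(\xi)$ is Lipschitz in $\xi$ with respect to the Hausdorff distance. More concretely, there are finitely many basis matrices $B$ (nonsingular submatrices of $W^T$) such that the maximum above is attained at a vertex $u_B(\xi) = B^{-T} q_B(\xi)$. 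Each such vertex is affine in $\xi$, so $\|u_B(\xi)\| \le C\max\{1,\|\xi\|\}$ and $\|u_B(\xi)-u_B(\xi')\| \le C\|\xi-\xi'\|$ for a constant $C$ depending only on $W$, $q$ and the finitely many $B$.

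\textbf{Step 1 (symmetric difference estimate).} Fix $x$ with $\|x\|\le r$, and let $u$ be an optimal dual vertex for $\xi$. Walkup--Wets gives a point $u'\in D(\xi')$ with $\|u-u'\|\le C\|\xi-\xi'\|$. Using $u'$ as a feasible (not necessarily optimal) dual point for $\xi'$,
\[
Q(x,\xi)-Q(x,\xi') \le (h(\xi)-T(\xi)x)^T u - (h(\xi')-T(\xi')x)^T u'.
\]
I split the right-hand side as
\[
\bigl[(h(\xi)-T(\xi)x)-(h(\xi')-T(\xi')x)\bigr]^T u + (h(\xi')-T(\xi')x)^T(u-u').
\]

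\textbf{Step 2 (bounding the two terms).} For the first term, (A3) gives
\[
\|h(\xi)-h(\xi')\| + \|T(\xi)-T(\xi')\|\,\|x\| \le C'(1+r)\|\xi-\xi'\|,
\]
and $\|u\|\le C\max\{1,\|\xi\|\}$. For the second term, $\|h(\xi')-T(\xi')x\|\le C''(1+r)\max\{1,\|\xi'\|\}$ and $\|u-u'\|\le C\|\xi-\xi'\|$. Together these give
\[
Q(x,\xi)-Q(x,\xi') \le \tilde L\,(1+r)\max\{1,\|\xi\|,\|\xi'\|\}\,\|\xi-\xi'\|.
\]
Exchanging $\xi$ and $\xi'$ gives the same bound for the absolute value. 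Taking $r\ge 1$, we have $1+r\le 2r$, so the bound becomes $L\,r\max\{1,\|\xi\|,\|\xi'\|\}\|\xi-\xi'\|$ with $L=2\tilde L$, which is \eqref{eq:romisch-lip-xi}.

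\textbf{Main obstacle.} The delicate step is the Lipschitz dependence of $D(\xi)$ on $\xi$, together with the requirement that optimal dual solutions can be chosen among vertices of linear growth. The issue is that $D(\xi)$ may be unbounded. If it has a nontrivial recession cone, the vertex set may be empty, and one must first argue (via primal feasibility in (A1)) that the dual maximum is finite and is attained on a face. To handle this, I would restrict to the lineality-free part, i.e.\ quotient out the lineality space $\{u: W^Tu=0\}$, on which the objective is constant by primal feasibility. After that reduction, Hoffman's error bound for the fixed matrix $W^T$ supplies the Hausdorff-Lipschitz constant $C$ uniformly over the polyhedral set $\Xi$ from (A2).
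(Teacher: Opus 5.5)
Your proof is correct. The paper itself gives no proof of this statement (it is recalled ``without proof'' from R\"omisch), so the relevant comparison is with the classical argument behind that citation.

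That argument builds on Walkup--Wets. It uses that the recourse function $\Phi(u,t)=\max\{t^T z : W^T z\le u\}$ is piecewise bilinear, $\Phi(u,t)=\langle C_j u,t\rangle$ on finitely many polyhedral cones covering its (convex) domain. From this it derives a joint estimate $|\Phi(u,t)-\Phi(\tilde u,\tilde t)|\le \hat L\bigl(\max\{1,\|t\|,\|\tilde t\|\}\|u-\tilde u\|+\max\{1,\|u\|,\|\tilde u\|\}\|t-\tilde t\|\bigr)$. It then composes this with the affine map $\xi\mapsto (q(\xi),h(\xi)-T(\xi)x)$.

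You instead argue pointwise. You take one optimal dual vertex at $\xi$, whose norm grows linearly because only finitely many bases occur. You then transfer it to a feasible dual point at $\xi'$ via Hoffman's bound $\operatorname{dist}(u,D(\xi'))\le \kappa(W^T)\,\|(q(\xi)-q(\xi'))_+\|$ for $u\in D(\xi)$.

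\begin{itemize}
\item What your route buys: it is more elementary and gives explicit constants (the Hoffman constant of $W^T$, $\max_B\|B^{-1}\|$, and the affine data). It also never uses (A2), since Hoffman's bound is uniform over all right-hand sides with nonempty feasible set and you never move along segments.
\item What the piecewise-bilinear route buys: more structure. The same joint estimate gives the Lipschitz dependence on $x$ and the growth bound on $|Q(x,\xi)|$ at once, and exhibits $Q(x,\cdot)$ as piecewise quadratic.
\end{itemize}

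Two small corrections.
\begin{itemize}
\item It is a nontrivial lineality space $\ker W^T$, not a nontrivial recession cone, that can make the vertex set empty. An unbounded pointed polyhedron still has vertices, and primal feasibility keeps the maximum finite. Your reduction to $u\in\mathrm{range}\,W$ handles exactly this case; there the candidate optima are $W_B(W_B^TW_B)^{-1}q_B(\xi)$ for sets $B$ of $\mathrm{rank}\,W$ linearly independent columns.
\item Restricting to $r\ge 1$ is the right reading of the quantifiers, not a mere convenience, since in general the factor $r$ cannot be sent to $0$ with $L$ fixed. Your bound holds for every $r\ge 1$ with $L$ independent of $r$, which implies the statement as written.
\end{itemize}
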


Under the conditions of the above Proposition, if the set $X$ is bounded by $r$, then the regret satisfies
\begin{equation}
R(\xi, \xi') = \sup_{x \in X} [Q(x, \xi) - Q(x, \xi')] \leq L \cdot r \cdot \max\{1, \|\xi\|, \|\xi'\|\} \cdot \|\xi - \xi'\| \leq L \cdot r \cdot \lVert \xi - \xi' \rVert.
\end{equation}
Let $c(\xi,\xi') = \lVert \xi - \xi' \rVert$ be the usual distance derived from a given norm $\lVert \cdot \rVert$ on the euclidean space $\Xi$. From Corollary~\ref{cor:symmetric-costs}, we have stability of the value function, that is,
\begin{equation}
    |v(P) - v(\nu)| \leq L \cdot r \cdot \mathcal{T}_c(P, \nu) = L \cdot r \cdot W_1(P, \nu).
\end{equation}

We have shown that our stability result Corollary~\ref{cor:symmetric-costs} can also be applied to the classical case of distances that derive from a norm on the noise space $\Xi$. One of the drawback of such approach is that in the context of scenario reduction, the stability bound may be loose as it is independant of the problem's data.

\subsection{Two-stage Stochastic Programs with Continuous Second Stage}

Consider a two-stage stochastic program where the second-stage problem is a linear program with fixed recourse and cost:
\begin{equation}
    \label{prob:lp-sensitivity}
Q(x, \xi) = \min\{q^T z : W z = h(\xi) - T(\xi)x, z \geq 0\}.
\end{equation}

Define the linear program sensitivity cost
\begin{equation}
c_{LP}(\xi, \xi') := M_{\pi} \cdot \left[ \|h(\xi) - h(\xi')\| + r \|T(\xi) - T(\xi')\| \right],
\end{equation}
where $M_{\pi} = \sup\{\|\pi\| : W^T\pi \leq q\}$ is the uniform bound on the dual feasible set.

% \textbf{TODO: precise the regularity needed and h and T to ensure that $c_{LP}$ is a problem-dependent ground cost. It should be much broader than h and T linear. I suppose that absolutely no assumption are needed if the noises are finite and loose stuff if noises are not finite.}
\begin{assumption}
Assume that $c_{LP}$ is a problem-dependent ground cost as in Definition~\ref{def:pd-cost}.
\end{assumption}

For the ground cost $c_{LP}$ to be a valid problem-dependent ground cost in the sense of Definition~\ref{def:pd-cost}, we need minimal regularity on $h$ and $T$. The following result extends classical stability results of R\"omisch and co-authors (see \cite{romisch2003} for instance) which typically assume that $h$ and $T$ are affine.  

\begin{proposition}
\label{prop:lp-sensitivity}

Assume the following on the data of Problem~\eqref{prob:lp-sensitivity},
\begin{itemize}
\item \emph{relatively complete recourse}: for all $x \in X$ and $\xi \in \Xi$, the second-stage problem is feasible,
\item \emph{bounded dual feasible region}: the dual feasible set $\{\pi : W^T\pi \leq q\}$ is bounded,
\item \emph{bounded first-stage decisions}: we assume that $\sup_{x \in X} \|x\| \leq r$.
\end{itemize}

Then the regret satisfies
\begin{equation}
R(\xi, \xi') := \sup_{x \in X} [Q(x, \xi) - Q(x, \xi')] \leq c_{LP}(\xi, \xi').
\end{equation}
\end{proposition}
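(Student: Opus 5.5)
We will use LP duality to write $Q(x,\xi)$ as a maximum over a fixed dual polytope. Because the recourse matrix $W$ and the cost $q$ do not depend on $\xi$, the dual feasible set $D := \{\pi : W^T\pi \leq q\}$ is the same for every $(x,\xi)$.

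\textbf{Step 1 (dual representation).} By relatively complete recourse the primal in \eqref{prob:lp-sensitivity} is feasible for every $x \in X$ and $\xi \in \Xi$. Since $D$ is bounded, we assume it is nonempty; if it were empty, the primal would be unbounded and $Q \equiv -\infty$, which is excluded by the integrability standing assumptions. A nonempty bounded polyhedron is compact, so the primal value is finite and strong duality gives
\begin{equation*}
Q(x,\xi) = \max_{\pi \in D} \pi^T\bigl(h(\xi) - T(\xi)x\bigr),
\end{equation*}
with the maximum attained. In particular $M_\pi = \sup_{\pi \in D}\|\pi\| < \infty$.

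\textbf{Step 2 (comparison via a common dual point).} Fix $x \in X$ and choose $\pi_\xi \in D$ optimal for scenario $\xi$. The same $\pi_\xi$ is feasible, though possibly suboptimal, for the dual at $(x,\xi')$, so $Q(x,\xi') \geq \pi_\xi^T(h(\xi') - T(\xi')x)$. Subtracting this from the identity at $\xi$ gives
\begin{equation*}
Q(x,\xi) - Q(x,\xi') \leq \pi_\xi^T\bigl(h(\xi) - h(\xi')\bigr) - \pi_\xi^T\bigl(T(\xi) - T(\xi')\bigr)x .
\end{equation*}
This inequality uses only the \emph{one-sided} comparison, which matches the one-sided definition of the regret $R$.

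\textbf{Step 3 (norm bounds and supremum).} Apply Cauchy--Schwarz to the first term. For the second term, use the operator-norm bound
\begin{equation*}
\|(T(\xi)-T(\xi'))x\| \leq \|T(\xi)-T(\xi')\|\,\|x\| \leq r\,\|T(\xi)-T(\xi')\|,
\end{equation*}
where $\|T\|$ is the operator norm induced by the Euclidean norm; any norm dominating it also works. Together with $\|\pi_\xi\| \leq M_\pi$ this yields
\begin{equation*}
Q(x,\xi) - Q(x,\xi') \leq M_\pi\bigl[\|h(\xi)-h(\xi')\| + r\|T(\xi)-T(\xi')\|\bigr] = c_{LP}(\xi,\xi').
\end{equation*}
The right-hand side does not depend on $x$, so taking the supremum over $x \in X$ gives $R(\xi,\xi') \leq c_{LP}(\xi,\xi')$.

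\textbf{Main obstacle.} The only delicate point is Step 1: justifying strong duality and attainment uniformly in $(x,\xi)$. The key observation is that $D$ is independent of $(x,\xi)$ and compact, so no regularity of $h$ or $T$ is needed for the inequality itself. Such regularity enters only through the standing assumption that $c_{LP}$ is a problem-dependent ground cost.
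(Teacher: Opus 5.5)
Your proof is correct and follows the paper's argument. You fix $x$ and take an optimal dual point at $\xi$, which remains feasible at $\xi'$ because $\{\pi : W^T\pi \leq q\}$ does not depend on $\xi$; you then apply weak duality, subtract, bound the inner product, and take the supremum over $x$. The differences are minor: the paper bounds the inner product by H\"older with $\|\pi\|_\infty \leq M_\pi$ and $\ell_1$ norms on $h$ and $T$, so it actually lands on an $\ell_1$ version of $c_{LP}$; your Cauchy--Schwarz and operator-norm pairing matches $c_{LP}$ exactly as written with $M_\pi$ and $r$, and you also justify nonemptiness of the dual feasible set, which the paper leaves implicit.
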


\begin{proof}
Let $x \in X$ be fixed. For every $\xi \in \Xi$, strong duality holds for Problem~\eqref{prob:lp-sensitivity} and its dual is
\begin{equation}
\max_{\pi} \{ \pi^T[h(\xi) - T(\xi)x] : W^T\pi \leq q \}.
\end{equation}
Since the dual feasible set $\{\pi : W^T\pi \leq q\}$ is independent of $\xi$ and bounded by assumption, for any optimal dual solution $\pi^*(\xi, x)$, we have $\|\pi^*(\xi, x)\| \leq M_{\pi}$.

Let $\pi^*(\xi, x)$ be an optimal dual solution for scenario $\xi$ and first-stage decision $x$. By strong duality for Problem~\eqref{prob:lp-sensitivity} at $(x,\xi)$, $$Q(x,\xi) = [\pi^*(\xi, x)]^T[h(\xi) - T(\xi)x],$$ and by weak duality (since $\pi^*(\xi, x)$ is dual feasible) we have
\begin{align}
Q(x, \xi') &\geq [\pi^*(\xi, x)]^T[h(\xi') - T(\xi')x].
\end{align}

Therefore:
\begin{align}
Q(x, \xi) - Q(x, \xi') &\leq [\pi^*(\xi, x)]^T[h(\xi) - T(\xi)x] - [\pi^*(\xi, x)]^T[h(\xi') - T(\xi')x]\\
&= [\pi^*(\xi, x)]^T \left([h(\xi) - h(\xi')] - [T(\xi) - T(\xi')]x\right)\\
&\leq \lVert \pi^*(\xi, x) \rVert_{\infty} \cdot \lVert [h(\xi) - h(\xi')] - [T(\xi) - T(\xi')]x \rVert_1  \tag{Hölder} \\
&\leq M_{\pi} \cdot \left( \lVert h(\xi) - h(\xi') \rVert_1 + \lVert [T(\xi) - T(\xi')]x \rVert_1 \right)\\
&\leq M_{\pi} \cdot \left( \lVert h(\xi) - h(\xi') \rVert_1 + \lVert T(\xi) - T(\xi') \rVert_1 \cdot \lVert x \rVert_{\infty} \right)\\
&\leq M_{\pi} \cdot \left( \lVert h(\xi) - h(\xi') \rVert_1 + R \lVert T(\xi) - T(\xi') \rVert_1 \right).
\end{align}

Taking the supremum over $x \in X$ yields $R(\xi, \xi') \leq c_{LP}(\xi, \xi')$.
\end{proof}

Thus, regret domination holds for $c_{LP}$. Then one gets stability by applying Proposition~\ref{prop:lp-sensitivity} to the problem dependent ground cost $c_{LP}$ and get that
\begin{equation}
    \lvert v(P) - v(\nu) \rvert \leq \mathcal{T}_{c_{LP}} (P, \nu).
\end{equation}

\subsection{Bertsimas-Mundru ground cost}
We now show how the Bertsimas-Mundru ground cost can be used for linear programs with fixed recourse. This cost function, which measures decision regret plus a regularization term, provides a concrete example of how problem-dependent costs can be designed and analyzed.

\begin{corollary}[Regret Domination for the Bertsimas-Mundru Cost]
\label{cor:linear-regret-dom}
Consider a two-stage linear stochastic program with fixed recourse and the Bertsimas-Mundru ground cost:
\begin{equation}
c_{BM}(\xi, \xi') = Q(x^*(\xi'), \xi) - Q(x^*(\xi), \xi) + \alpha \|\xi - \xi'\|,
\end{equation}
where $x^*(\xi) \in \argmin_{x \in X} \{g(x) + Q(x, \xi)\}$ is the optimal first-stage decision for scenario $\xi$, and $\alpha > 0$ is a regularization parameter.

Under the assumptions of Proposition~\ref{prop:lp-sensitivity} (relatively complete recourse, bounded dual feasible region, and bounded first-stage decisions), and additionally assuming that $h(\cdot)$ and $T(\cdot)$ are Lipschitz continuous with constants $L_h$ and $L_T$ respectively, the Bertsimas-Mundru cost satisfies regret domination with constant:
\begin{equation}
\beta = \frac{M_{\pi}(L_h + R L_T)}{\alpha}.
\end{equation}

Therefore, by Theorem~\ref{thm:main-stability}, we obtain the stability bound:
\begin{equation}
|v(P) - v(\nu)| \leq \frac{M_{\pi}(L_h + R L_T)}{\alpha} \cdot \mathcal{T}_{c_{BM}}(P, \nu).
\end{equation}
\end{corollary}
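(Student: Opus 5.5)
The plan is to chain two estimates: an upper bound on the regret $R(\xi,\xi')$ that is linear in $\|\xi-\xi'\|$, and a lower bound $c_{BM}(\xi,\xi') \geq \alpha\|\xi-\xi'\|$. Dividing the first by the second gives regret domination with $\beta = M_\pi(L_h + R L_T)/\alpha$. Theorem~\ref{thm:main-stability} then yields the stability bound.

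\emph{Step 1 (upper bound on the regret).} Under the hypotheses of Proposition~\ref{prop:lp-sensitivity},
\[
R(\xi,\xi') \leq c_{LP}(\xi,\xi') = M_\pi\bigl[\|h(\xi)-h(\xi')\| + R\,\|T(\xi)-T(\xi')\|\bigr],
\]
where $R$ denotes the bound $r$ on $\sup_{x\in X}\|x\|$. Inserting the Lipschitz hypotheses $\|h(\xi)-h(\xi')\|\le L_h\|\xi-\xi'\|$ and $\|T(\xi)-T(\xi')\|\le L_T\|\xi-\xi'\|$ gives
\[
R(\xi,\xi') \leq M_\pi (L_h + R L_T)\,\|\xi-\xi'\|.
\]
The norms must be matched with those used in the proof of Proposition~\ref{prop:lp-sensitivity}: the $\ell_1$ norm on $h$ and $T$, and $\ell_\infty$ on $\pi$ and $x$. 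I would take $L_h, L_T$ to be the Lipschitz constants with respect to these norms, or absorb the norm-equivalence factors into them.

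\emph{Step 2 (lower bound on $c_{BM}$).} I want to show $c_{BM}(\xi,\xi') \geq \alpha\|\xi-\xi'\|$, i.e.\ that the decision-regret part $Q(x^*(\xi'),\xi) - Q(x^*(\xi),\xi)$ is nonnegative. I expect this to be the main obstacle. The decision $x^*(\xi)$ minimizes $g+Q(\cdot,\xi)$, not $Q(\cdot,\xi)$ alone, so optimality only gives
\[
Q(x^*(\xi'),\xi)-Q(x^*(\xi),\xi) \geq g(x^*(\xi)) - g(x^*(\xi')),
\]
and the right-hand side can be negative. I would first try to use the nonnegativity property recorded in the Key Properties of the Bertsimas--Mundru example, treating it as part of the standing setup. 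Failing that, I would add an explicit hypothesis that makes the step valid, for instance:
\begin{itemize}
\item $g$ is constant on $X$, or
\item the regret inside $c_{BM}$ is measured with $g+Q$ in place of $Q$.
\end{itemize}
In the second case optimality of $x^*(\xi)$ gives the inequality directly, and the same argument goes through. With this in hand, $\|\xi-\xi'\| \leq c_{BM}(\xi,\xi')/\alpha$. Combining with Step 1 gives $R(\xi,\xi') \leq \beta\, c_{BM}(\xi,\xi')$ with $\beta = M_\pi(L_h+RL_T)/\alpha$ for all $\xi,\xi'$, which is Assumption~\ref{ass:regret-dom}.

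\emph{Step 3 (stability).} The remaining hypotheses of Theorem~\ref{thm:main-stability} need checking:
\begin{itemize}
\item Well-posedness follows from the standing assumptions.
\item Integrability follows because $|Q(x,\xi)|$ is bounded by $M_\pi\|h(\xi)-T(\xi)x\|$, which grows at most linearly in $\xi$ when $h,T$ are Lipschitz. So finite first moments suffice.
\end{itemize}
Theorem~\ref{thm:main-stability} then gives $v(P)-v(\nu)\le\beta\,\mathcal{T}_{c_{BM}}(P,\nu)$.

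Since $c_{BM}$ is asymmetric, the two-sided bound as stated needs one more point. Either replace $\mathcal{T}_{c_{BM}}(P,\nu)$ by $\max\{\mathcal{T}_{c_{BM}}(P,\nu),\mathcal{T}_{c_{BM}}(\nu,P)\}$, or apply the argument to the symmetrized cost $c_S$. Summing the inequality for $(\xi,\xi')$ and $(\xi',\xi)$ shows $c_S$ inherits the lower bound $\alpha\|\xi-\xi'\|$ under the same hypothesis. Corollary~\ref{cor:symmetric-costs} then applies to $c_S$ directly.
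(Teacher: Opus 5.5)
Your route is the paper's. Its proof also gets $\beta$ by combining $c_{BM}(\xi,\xi')\ge\alpha\|\xi-\xi'\|$ with the Lipschitz form $R(\xi,\xi')\le M_\pi(L_h+RL_T)\|\xi-\xi'\|$ of Proposition~\ref{prop:lp-sensitivity}.

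\textbf{Step 2.} The paper simply asserts that the decision-regret term is nonnegative ``by construction''. Your objection is correct: $x^*(\xi)$ minimizes $g+Q(\cdot,\xi)$, not $Q(\cdot,\xi)$, so the term can be negative. Neither this step nor the ``Key Properties'' claim can be taken as given, and the corollary genuinely needs an extra hypothesis such as one of yours.

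Here is an instance satisfying every listed assumption. Take $W=(1,\,-1)$ and $q=(1,1)^T$, so the dual feasible set is $[-1,1]$, $M_\pi=1$, and $Q(x,\xi)=|h(\xi)-T(\xi)x|$. Take $h(\xi)=\xi_1\xi_2$ and $T(\xi)=\xi_1$ on $\Xi=[0,2]\times[0,1]$, with $X=[0,1]$ and $g(x)=|x|$. Then $Q(x,\xi)=\xi_1|\xi_2-x|$.
\begin{itemize}
\item For $\xi=(1/2,1)$ and $\xi'=(2,1)$ the first-stage minimizers are unique: $x^*(\xi)=0$ and $x^*(\xi')=1$.
\item Hence $c_{BM}(\xi,\xi')=-\tfrac12+\tfrac32\alpha$.
\item The regret is $R(\xi,\xi')=\sup_{x\in[0,1]}\bigl(-\tfrac32(1-x)\bigr)=0$.
\end{itemize}
For $\alpha<1/3$, regret domination therefore fails for every $\beta>0$. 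With $P=\delta_\xi$ and $\nu=\delta_{\xi'}$ the stability bound fails too, since $|v(P)-v(\nu)|=\tfrac12$ while $\mathcal{T}_{c_{BM}}(P,\nu)<0$. Either of your remedies restores the lower bound: $g$ constant on $X$, or $g+Q$ in place of $Q$ in the decision-regret term.

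\textbf{Final step.} You are more pessimistic than necessary. Once $c_{BM}(\xi,\xi')\ge\alpha\|\xi-\xi'\|$ holds for all ordered pairs, the bound $|v(P)-v(\nu)|\le\beta\,\mathcal{T}_{c_{BM}}(P,\nu)$ holds exactly as stated. You do not need the maximum or $c_S$.
\begin{itemize}
\item Your Step~1 bound is symmetric in $(\xi,\xi')$. So Corollary~\ref{cor:symmetric-costs}, applied to the cost $\|\xi-\xi'\|$, gives $|v(P)-v(\nu)|\le M_\pi(L_h+RL_T)\,W_1(P,\nu)$.
\item Integrating $\|\xi-\xi'\|\le c_{BM}(\xi,\xi')/\alpha$ against an arbitrary coupling in $\Pi(P,\nu)$ gives $W_1(P,\nu)\le\mathcal{T}_{c_{BM}}(P,\nu)/\alpha$.
\end{itemize}
Equivalently, $R(\xi,\xi')\le\beta\,c_{BM}(\xi',\xi)$ also holds. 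Theorem~\ref{thm:main-stability} applied to the transposed cost then yields $v(\nu)-v(P)\le\beta\,\mathcal{T}_{c_{BM}}(P,\nu)$.

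The paper's proof ignores this direction issue entirely, so your attention to it is a real improvement. Close it with this argument rather than weakening the conclusion.
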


\begin{proof}
The Bertsimas-Mundru cost is defined as:
\begin{equation}
c_{BM}(\xi, \xi') = Q(x^*(\xi'), \xi) - Q(x^*(\xi), \xi) + \alpha \|\xi - \xi'\|.
\end{equation}

By construction, $c_{BM}(\xi, \xi') \geq \alpha \|\xi - \xi'\|$ since the first term is non-negative. Therefore, the Bertsimas-Mundru cost satisfies the minimal growth property required in Proposition~\ref{prop:lp-sensitivity} part (3), which immediately gives regret domination with constant $\beta = \frac{M_{\pi}(L_h + R L_T)}{\alpha}$.
\end{proof}

The Bertsimas-Mundru cost has two components with distinct roles:
\begin{enumerate}
\item \emph{Decision regret term}: $Q(x^*(\xi'), \xi) - Q(x^*(\xi), \xi)$ measures the suboptimality of using the wrong first-stage decision. This captures the economic cost of misidentifying the scenario.
\item \emph{Regularization term}: $\alpha \|\xi - \xi'\|$ ensures the minimal growth property required in Proposition~\ref{prop:lp-sensitivity}.
\end{enumerate}

The parameter $\alpha$ controls the trade-off, small $\alpha$ emphasizes decision-specific structure, potentially yielding tighter scenario clusters but weaker stability guarantees (larger $\beta$). While large $\alpha$ provides stronger stability guarantees (smaller $\beta$) but may dilute the problem-specific information.
For linear programs, the Bertsimas-Mundru approach offers several practical advantages:
\begin{enumerate}
\item \emph{Computational tractability}: Both $Q(x^*(\xi'), \xi)$ and $Q(x^*(\xi), \xi)$ can be evaluated by solving linear programs.
\item \emph{Economic interpretation}: The cost directly measures the financial impact of scenario misidentification.
\item \emph{Flexibility}: the regularization parameter $\alpha$ can be tuned based on the desired stability-accuracy trade-off.
\end{enumerate}

However, computing the exact value of $\beta$ requires knowing $M_{\pi}$, which may be challenging in practice. Conservative estimates can be used at the expense of potentially looser bounds.

\begin{example}[Stochastic Unit Commitment]
Consider a power system with generating units $i \in I$ and time periods $t \in T$. The first-stage decisions are binary commitment variables $u_{it} \in \{0,1\}$ indicating whether unit $i$ is online at time $t$. Given these commitments, the second-stage problem under demand scenario $\xi$ is:
\begin{align}
Q(u, \xi) = \min \quad & \sum_{i,t} c_i p_{it}(\xi) + \sum_t \rho s_t(\xi) \\
\text{s.t.} \quad & \sum_i p_{it}(\xi) + s_t(\xi) = D_t(\xi) \quad \forall t \\
& p_i^{\min} u_{it} \leq p_{it}(\xi) \leq p_i^{\max} u_{it} \quad \forall i,t \\
& -r_i^{\text{down}} \leq p_{it}(\xi) - p_{i,t-1}(\xi) \leq r_i^{\text{up}} \quad \forall i,t \\
& p_{it}(\xi), s_t(\xi) \geq 0 \quad \forall i,t
\end{align}
where:
\begin{itemize}
\item $p_{it}(\xi)$ is the power output of unit $i$ at time $t$ (continuous)
\item $s_t(\xi)$ is load shedding at time $t$ (continuous)
\item $D_t(\xi)$ is the uncertain demand at time $t$.
\item $c_i$ is the generation cost and $\rho$ is the load shedding penalty.
\end{itemize}

Since all second-stage variables are continuous, Proposition~\ref{prop:lp-sensitivity} can be applied. The dual variables $\pi_t(\xi)$ represent the marginal value of energy at time $t$ (locational marginal prices), and the regret bound becomes:
\begin{equation}
|Q(u, \xi) - Q(u, \xi')| \leq \sum_t \bar{\pi}_t |D_t(\xi) - D_t(\xi')|,
\end{equation}
where $\bar{\pi}_t = \sup_{\xi, u} |\pi_t(\xi, u)|$ bounds the energy prices.

This suggests the problem-dependent ground cost
\begin{equation}
c_{UC}(\xi, \xi') = \sum_t \bar{\pi}_t |D_t(\xi) - D_t(\xi')|,
\end{equation}
which weights demand differences by their economic impact through energy prices.
\end{example}

The stochastic unit commitment example illustrates an important class of problems where:
\begin{itemize}
\item first-stage decisions may be discrete, here the unit commitments,
\item second-stage decisions are continuous, here the dispatch levels.
\end{itemize}

Even though the first stage variables are the binary decisions $u$, the second-stage problem is a parametric LP in the uncertain demands $D_t(\xi)$ and we can apply Proposition~\ref{prop:lp-sensitivity}. This structure appears in many applications including facility location with continuous allocation, network design with continuous flows...

\subsection{Two-stage Stochastic Programs with Mixed-Integer Second Stage}

For mixed-integer second-stage problems, establishing regret bounds is more challenging than for continuous problems due to the lack of strong duality. We present two approaches: a general result using integrality gap bounds, and problem-specific analyses that exploit combinatorial structure.

\subsubsection{General Bounds via Integrality Gap}

When the uncertainty only affects the right-hand side $h(\xi)$ and technology matrix $T(\xi)$, we can derive bounds using the LP relaxation and integrality gap estimates. The proposition handles uncertainty in $h(\xi)$ and $T(\xi)$ because these only affect the dual objective value, not the dual feasible region.

\begin{proposition}[MILP Regret Bounds via Integrality Gap]
\label{prop:milp-integrality-gap}
Consider a mixed-integer second-stage problem:
\begin{equation}
Q(x, \xi) = \min_{y,z} \{q^T y + r^T z : W_y y + W_z z \leq h(\xi) - T(\xi)x, y \geq 0, z \in \mathbb{Z}^{n_I}_+\},
\end{equation}
where only $h(\xi)$ and $T(\xi)$ depend on the scenario $\xi$.

Let $Q_{LP}(x, \xi)$ denote the LP relaxation value. Assume:
\begin{itemize}
\item The LP relaxation has bounded dual optimal solutions: there exists $M_{\pi} > 0$ such that for all optimal dual solutions $\pi^*(\xi)$ and all scenarios $\xi \in \Xi$, we have $\|\pi^*(\xi)\|_{\infty} \leq M_{\pi}$.
\item The integrality gap is uniformly bounded: there exists $\gamma > 0$ such that $\sup_{x \in X, \xi \in \Xi} [Q(x, \xi) - Q_{LP}(x, \xi)] \leq \gamma$.
\item First-stage decisions are bounded: $\|x\|_{\infty} \leq R$ for all $x \in X$.
\end{itemize}

Then the regret $R(\xi, \xi') = \sup_{x \in X} [Q(x, \xi) - Q(x, \xi')]$ satisfies:
\begin{equation}
R(\xi, \xi') \leq M_{\pi} \left[\|h(\xi) - h(\xi')\|_1 + R\|T(\xi) - T(\xi')\|_1\right] + \gamma.
\end{equation}
\end{proposition}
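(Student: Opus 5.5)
The plan is to bracket $Q$ between its LP relaxation and the LP relaxation plus $\gamma$, and then reuse the LP argument of Proposition~\ref{prop:lp-sensitivity} on the relaxation. Fix $x \in X$ and $\xi, \xi' \in \Xi$. Since the relaxation is a relaxation, $Q_{LP}(x,\xi') \le Q(x,\xi')$. The uniform integrality gap gives $Q(x,\xi) \le Q_{LP}(x,\xi) + \gamma$. Combining these,
\begin{equation*}
Q(x,\xi) - Q(x,\xi') \le \bigl[Q_{LP}(x,\xi) - Q_{LP}(x,\xi')\bigr] + \gamma .
\end{equation*}
So it suffices to bound the LP-relaxation difference by $M_\pi[\|h(\xi)-h(\xi')\|_1 + R\|T(\xi)-T(\xi')\|_1]$, uniformly in $x$. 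Taking the supremum over $x \in X$ then yields the claim.

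For the LP part, write the dual of the relaxation. With constraints $W_y y + W_z z \le h(\xi) - T(\xi)x$ and $y,z\ge 0$, the dual is
\begin{equation*}
\max_{\pi}\ \pi^T[h(\xi)-T(\xi)x] \quad \text{s.t.}\quad W_y^T\pi \ge -q,\ W_z^T\pi \ge -r,\ \pi \le 0 .
\end{equation*}
The sign conventions do not matter; what matters is that the feasible region does not depend on $\xi$ or $x$, because the scenario enters only through the right-hand side. Take an optimal dual solution $\pi^*=\pi^*(\xi)$ at $(x,\xi)$. Strong duality gives $Q_{LP}(x,\xi) = (\pi^*)^T[h(\xi)-T(\xi)x]$, and weak duality at $(x,\xi')$ gives $Q_{LP}(x,\xi') \ge (\pi^*)^T[h(\xi')-T(\xi')x]$. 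Subtracting,
\begin{equation*}
Q_{LP}(x,\xi) - Q_{LP}(x,\xi') \le (\pi^*)^T\bigl([h(\xi)-h(\xi')] - [T(\xi)-T(\xi')]x\bigr).
\end{equation*}
Next apply Hölder's inequality in the $\|\cdot\|_\infty/\|\cdot\|_1$ pairing with $\|\pi^*\|_\infty \le M_\pi$. Then use the triangle inequality and $\|[T(\xi)-T(\xi')]x\|_1 \le \|T(\xi)-T(\xi')\|_1\|x\|_\infty$, where the matrix norm is understood as the induced $\ell_\infty\to\ell_1$ norm or the entrywise $\ell_1$ norm, both of which work. Finally $\|x\|_\infty \le R$ gives the desired bound. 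This is exactly the chain of inequalities in the proof of Proposition~\ref{prop:lp-sensitivity}.

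The main obstacle is justifying strong duality for the LP relaxation at $(x,\xi)$, which is needed for an optimal dual $\pi^*$ to exist and attain $Q_{LP}(x,\xi)$. I will argue as follows.
\begin{itemize}
\item If $Q(x,\xi) = +\infty$, the MILP is infeasible. The stated gap bound is implicitly taken with $Q$ finite, so I will assume (as in the standing relatively complete recourse assumption) that the second stage is feasible for all $(x,\xi)$.
\item Feasibility of the MILP implies feasibility of the relaxation.
\item The gap bound forces $Q_{LP}(x,\xi) \ge Q(x,\xi)-\gamma > -\infty$.
\item A feasible LP with finite value has an optimal dual solution with equal value, and the hypothesis bounds every such dual by $M_\pi$.
\end{itemize}
I would state this feasibility convention explicitly at the start of the proof.
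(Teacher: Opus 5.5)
Your proposal is correct and follows the paper's proof: sandwich $Q$ between $Q_{LP}$ and $Q_{LP}+\gamma$, then bound $Q_{LP}(x,\xi)-Q_{LP}(x,\xi')$ by strong duality at one scenario and weak duality at the other, over the $\xi$-independent dual feasible set, followed by H\"older with $\|\pi^*\|_\infty\le M_\pi$. The paper derives a two-sided bound using both $\pi^*(\xi)$ and $\pi^*(\xi')$, of which only your one-sided step is needed; your $\ell_\infty\to\ell_1$ (or entrywise) reading of $\|T(\xi)-T(\xi')\|_1$ is also more careful than the paper's incorrect step $\|x\|_1\le\|x\|_\infty$. One slip that does not affect the argument: with $\pi\le 0$ the dual constraints should read $W_y^T\pi\le q$, $W_z^T\pi\le r$ rather than $W_y^T\pi\ge -q$, $W_z^T\pi\ge -r$, but as you say, the argument only uses that this feasible region does not depend on $(x,\xi)$.
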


\begin{proof}
For the LP relaxation, we can apply the same sensitivity analysis as in Proposition~\ref{prop:lp-sensitivity}. The dual feasible region is 
\begin{equation}
\{\pi : W_y^T \pi \leq q, W_z^T \pi \leq r\},
\end{equation}
which is independent of $\xi$ since only $h(\xi)$ and $T(\xi)$ vary with the scenario. That is, any admissible dual multipliers $\pi(\xi)$ of $Q(x,\xi)$ is also admissible for $Q(x,\xi')$. 
% \textbf{todo: put a name to the problem instead of using the value.}

Let $\pi^*(\xi')$ be optimal dual multipliers for the LP relaxation at scenario $\xi'$. As $\pi^*(\xi')$ is dual admissible for $Q_{LP}(x, \xi)$, we have $Q_{LP}(x, \xi) \geq \pi^*(\xi')^T[h(\xi) - T(\xi)x]$. By strong duality, $Q_{LP}(x, \xi') = \pi^*(\xi')^T[h(\xi') - T(\xi')x]$. Thus:
\begin{align}
Q_{LP}(x, \xi) - Q_{LP}(x, \xi') &\geq \pi^*(\xi')^T[h(\xi) - T(\xi)x] - \pi^*(\xi')^T[h(\xi') - T(\xi')x] \\
&= \pi^*(\xi')^T[(h(\xi) - h(\xi')) - (T(\xi) - T(\xi'))x].
\end{align}

Multiplying both sides by $-1$, we get
\begin{equation}
Q_{LP}(x, \xi') - Q_{LP}(x, \xi) \leq \pi^*(\xi')^T[(h(\xi') - h(\xi)) - (T(\xi') - T(\xi))x].
\end{equation}

Similarly, with optimal dual multipliers $\pi^*(\xi)$ for scenario $\xi$ we get
\begin{equation}
Q_{LP}(x, \xi') - Q_{LP}(x, \xi) \geq \pi^*(\xi)^T[(h(\xi') - h(\xi)) - (T(\xi') - T(\xi))x]
\end{equation}

Thus, we have 
\begin{align}
\lvert Q_{LP}(x, \xi') - Q_{LP}(x, \xi) \rvert \leq \max \left\{ \lvert \pi^*(\xi')^T[(h(\xi') - h(\xi)) - (T(\xi') - T(\xi))x \rvert, \right.\\ \left. \lvert \pi^*(\xi)^T[(h(\xi') - h(\xi)) - (T(\xi') - T(\xi))x \rvert  \right\}.
\end{align}

By Hölder's inequality on each term in the r.h.s., with $\|\pi^*(\xi')\|_{\infty} \leq M_{\pi}$ and $\|x\|_{1} \leq \|x\|_{\infty} \leq R$,
\begin{equation}
Q_{LP}(x, \xi') - Q_{LP}(x, \xi) \leq M_{\pi} \left[\|h(\xi') - h(\xi)\|_1 + R\|T(\xi') - T(\xi)\|_1\right].
\end{equation}

Now, using the relationship between the MILP and its LP relaxation, we have that
\begin{align}
Q(x, \xi) - Q(x, \xi') &= [Q(x, \xi) - Q_{LP}(x, \xi)] + [Q_{LP}(x, \xi) - Q_{LP}(x, \xi')] + [Q_{LP}(x, \xi') - Q(x, \xi')] \\
&\leq \gamma + [Q_{LP}(x, \xi) - Q_{LP}(x, \xi')] + 0 \\
&\leq \gamma + M_{\pi} \left[\|h(\xi) - h(\xi')\|_1 + R\|T(\xi) - T(\xi')\|_1\right].
\end{align}

Since this bound holds for all $x \in X$:
\begin{equation}
R(\xi, \xi') = \sup_{x \in X} [Q(x, \xi) - Q(x, \xi')] \leq M_{\pi} \left[\|h(\xi) - h(\xi')\|_1 + R\|T(\xi) - T(\xi')\|_1\right] + \gamma.
\end{equation}

This suggests the ground cost:
\begin{equation}
c_{MILP}(\xi, \xi') = M_{\pi} \left[\|h(\xi) - h(\xi')\|_1 + R\|T(\xi) - T(\xi')\|_1\right] + \gamma,
\end{equation}
which satisfies regret domination with $\beta = 1$ for the regret $R(\xi, \xi') = \sup_{x \in X} [Q(x, \xi) - Q(x, \xi')]$.
\end{proof}

The bound in Proposition~\ref{prop:milp-integrality-gap} depends on the integrality gap $\gamma$, which can be zero for totally unimodular constraint matrices (e.g., network flow problems).

\begin{example}[Capacitated Network Design with Integer Flows]
Consider a two-stage stochastic network design problem on a directed graph $G = (V, A)$ where
\begin{itemize}
\item first-stage decisions $x \in \{0,1\}^{|A|}$ determine which arcs to open,
\item second-stage decisions route integer commodity flows subject to arc capacities.
\end{itemize}

The second-stage problem for scenario $\xi$ is
\begin{align}
Q(x, \xi) = \min \quad & \sum_{a \in A} \sum_{c \in C} q_{ac} y_{ac} \\
\text{s.t.} \quad & \sum_{a: a^{out}(v)} y_{ac} - \sum_{a: a^{in}(v)} y_{ac} = d_{vc}(\xi) \quad \forall v \in V, c \in C \\
& \sum_{c \in C} y_{ac} \leq u_a x_a \quad \forall a \in A \\
& y_{ac} \in \mathbb{Z}_+ \quad \forall a \in A, c \in C
\end{align}

where
\begin{itemize}
\item $C$ is the set of commodities to be routed,
\item $y_{ac}$ is the integer flow of commodity $c$ on arc $a$,
\item $q_{ac}$ is the unit cost of routing commodity $c$ on arc $a$,
\item $u_a$ is the capacity of arc $a$,
\item $d_{vc}(\xi)$ is the net demand for commodity $c$ at node $v$ under scenario $\xi$ (positive for sinks, negative for sources, zero for transshipment nodes),
\item $a^{out}(v)$ and $a^{in}(v)$ denote the outgoing and incoming arcs at node $v$.
\end{itemize}

Only demands $d_{vc}(\xi)$ vary with scenarios (RHS uncertainty). If $|\pi_{vc}^*| \leq \bar{\pi}_{vc}$ uniformly over all dual optimal solutions, then by Proposition~\ref{prop:milp-integrality-gap} (noting that demands appear linearly in the RHS):

\begin{equation}
|Q(x, \xi) - Q(x, \xi')| \leq \sum_{v \in V} \sum_{c \in C} \bar{\pi}_{vc} |d_{vc}(\xi) - d_{vc}(\xi')| + \gamma,
\end{equation}

where $\gamma$ is the integrality gap. For network flow problems with integer capacities and demands, the constraint matrix is totally unimodular, so $\gamma = 0$. This gives the exact bound:

\begin{equation}
|Q(x, \xi) - Q(x, \xi')| \leq \sum_{v \in V} \sum_{c \in C} \bar{\pi}_{vc} |d_{vc}(\xi) - d_{vc}(\xi')|.
\end{equation}

This suggests using the ground cost $c_{ND}(\xi, \xi') = \sum_{v,c} \bar{\pi}_{vc} |d_{vc}(\xi) - d_{vc}(\xi')|$ for network design problems.
\end{example}

In general, even when the integrality gap is uniformly upper bounded, computing tight integrality gap bounds can be difficult. However, for specific problem structures, we can derive tighter a tighter upper bound on the regret that does not explicitly depend on the integrality gap, as we show next.

\subsubsection{Single-Sourcing Capacitated Facility Location Problem}

While Proposition~\ref{prop:milp-integrality-gap} gives a general bound depending on the integrality gap between the second-stage MILP and its LP relaxation, many MILPs have special structure that enables tighter bounds without explicit integrality gap estimates. We illustrate this through examples. These examples serve as illustration how versatile our approach can be, as building any problem-dependant ground cost that dominates the regret will ensure stability of the value function with respect to the optimal transport cost.

\begin{example}[Tight Bound for Single-Sourcing CFL]
\label{ex:cfl-tight-bound}
Consider the single-sourcing CFL where each customer must be served entirely by one facility:
\begin{equation}
Q(y, \xi) = \min \left\{ \sum_{i,j} c_{ij} w_{ij} \xi_j : \sum_i w_{ij} = 1, \sum_j w_{ij}\xi_j \leq K_i y_i, w_{ij} \in \{0,1\} \right\}.
\end{equation}

For fixed facilities $y$, let $\bar{c}_j = \max_i c_{ij}$. Then:
\begin{equation}
|Q(y, \xi) - Q(y, \xi')| \leq \sum_{j=1}^n |\xi_j - \xi'_j| \cdot \bar{c}_j.
\end{equation}

This motivates the ground cost $c_{CFL}(\xi, \xi') = \sum_j |\xi_j - \xi'_j| \bar{c}_j$.
\end{example}

\begin{proof}
The key is exploiting the single-sourcing structure. Let $w^*(\xi)$ be an optimal assignment for scenario $\xi$. For any scenarios $\xi, \xi'$:
\begin{align}
Q(y,\xi) - Q(y,\xi') &\leq \sum_{i,j} c_{ij} w^*_{ij}(\xi') \xi_j - \sum_{i,j} c_{ij} w^*_{ij}(\xi') \xi'_j \\
&= \sum_j \left(\sum_i c_{ij} w^*_{ij}(\xi')\right) (\xi_j - \xi'_j) \\
&= \sum_j c_{i^*_j(\xi'),j} (\xi_j - \xi'_j)
\end{align}
where $i^*_j(\xi')$ is the facility serving customer $j$ under $\xi'$.

Since this holds for any assignment, including the worst-case:
\begin{equation}
Q(y,\xi) - Q(y,\xi') \leq \sum_j \max_i c_{ij} \cdot \max\{0, \xi_j - \xi'_j\} \leq \sum_j \bar{c}_j |\xi_j - \xi'_j|.
\end{equation}

The bound is tight when all customers are served by their most expensive facilities.
\end{proof}

\begin{remark}[High Capacity Case]
When capacities $K_i$ are sufficiently high (i.e., no capacity constraints are binding), customers can be assigned to their cheapest facilities. In this case, we can obtain a sharper bound using $\underline{c}_j = \min_i c_{ij}$ instead of $\bar{c}_j$:
\begin{equation}
|Q(y, \xi) - Q(y, \xi')| \leq \sum_{j=1}^n |\xi_j - \xi'_j| \cdot \underline{c}_j.
\end{equation}
This reflects that without capacity constraints, each customer will be served by its cheapest available facility.
\end{remark}

Applying Proposition~\ref{prop:milp-integrality-gap} would require bounding dual variables for the assignment constraints and the integrality gap, potentially yielding a looser bound. The direct combinatorial argument exploits the specific structure of single-sourcing constraints to avoid dependence on the integrality gap. 

\subsubsection{Unbounded Integer Knapsack}

Some MILP problems exhibit threshold behavior where small parameter changes can cause large objective changes. Even here, we can find appropriate problem dependant ground costs that dominates the regret without involving the integrality gap. Moreover, the following example illustrates that regret domination for MILP may require using a problem-dependent ground cost that captures step function behavior.

\begin{example}[Unbounded Integer Knapsack as Second Stage]
Consider a simple toy two-stage problem where the first stage involves no decisions (or fixed decisions) and the second-stage problem is an unbounded integer knapsack with uncertain capacity
\begin{equation}
Q(x, \xi) = \max \left\{ \sum_{j=1}^n v_j z_j : \sum_{j=1}^n w_j z_j \leq \xi, z_j \in \mathbb{Z}_+ \right\},
\end{equation}
where $\xi$ is the uncertain knapsack capacity and items can be taken multiple times. Note that this fits the framework of Proposition~\ref{prop:milp-integrality-gap} with $h(\xi) = \xi$, $T(\xi) = 0$ (no technology matrix), and the constraint matrix consisting only of weights $w_j$.

The value function $Q(x,\xi)$ is piecewise constant with discrete jumps at critical thresholds. When $\xi$ increases past a multiple of weight $w_j$, an additional copy of item $j$ becomes feasible, causing the value to jump by $v_j$. This creates a step function (which is in particular not Lipschitz continuous).

When all weights are integers, we can derive a stepwise bound using the greatest common divisor. Set $g = \gcd(w_1, \ldots, w_n)$. The value function can only change at capacities that are multiples of $g$, setting $\rho = \max_j v_j/w_j$ we have
\begin{equation}
Q(\xi) - Q(\xi') \leq \rho \cdot g \cdot \left(\left\lfloor \frac{\max(\xi, \xi')}{g} \right\rfloor - \left\lfloor \frac{\min(\xi, \xi')}{g} \right\rfloor\right)
\end{equation}

This bound is a step function that correctly captures that no value change occurs between multiples of $g$. One could also exploit the dynamic programming structure of the problem and get the linear upper bound
\begin{equation}
Q(\xi) - Q(\xi') \leq \rho \cdot |\xi - \xi'|.
\end{equation}

\emph{Numerical comparison.} Consider items with weights $w = (6, 9, 15)$ and values $v = (30, 36, 45)$, giving value-to-weight ratios $(5, 4, 3)$. Thus $\rho = 5$ and $g = \gcd(6, 9, 15) = 3$.

For $\xi = 14$ and $\xi' = 13$, we have
\begin{itemize}
\item both capacities round down to $12$ (the nearest multiple of $g=3$). At capacity $12$, the optimal solution takes $2$ copies of item $1$, giving $Q(14) = Q(13) = Q(12) = 60$,
\item actual regret is $R(14, 13) = Q(14) - Q(13) = 60 - 60 = 0$,
\item stepwise GCD bound is $\rho \cdot g \cdot (\lfloor 14/3 \rfloor - \lfloor 13/3 \rfloor) = 5 \cdot 3 \cdot (4 - 4) = 0$,
\item and the linear bound is $\rho \cdot |14 - 13| = 5 \cdot 1 = 5$, which is worse than the stepwise GCD bound.
\end{itemize}

This illustrates both the flexibility of our approach to get stability (whatever upper bounds the regret works) and how incorporating problem structure (here, the GCD of weights) can lead to tighter bounds that capture the stepwise nature of the value function.

\end{example}

\section{Conclusion and Future Directions}\label{sec:conclusion}

This paper establishes stability results for two-stage stochastic programs using problem-dependent transport costs, providing theoretical justification for approaches like Bertsimas-Mundru \cite{bertsimas2023} that use general costs rather than distances. Classical stability theory relies on the Wasserstein-Fortet-Mourier duality, which breaks down when Euclidean ground metrics are replaced with problem-dependent costs that are not metrics. We developed a direct approach that bypasses this duality entirely.

Our result is Theorem~\ref{thm:main-stability}, which proves that under regret domination—where the worst-case regret $R(\xi, \xi') = \sup_{x \in X} [Q(x, \xi) - Q(x, \xi')]$ is bounded by $\beta \cdot c(\xi, \xi')$—we obtain stability bounds of the form $|v(P) - v(\nu)| \leq \beta \cdot \max\{\mathcal{T}_c(P, \nu), \mathcal{T}_c(\nu, P)\}$. This result works directly with the primal transport formulation without requiring dual representations.

We presented sufficient conditions for establishing regret domination in Section~\ref{sec:sufficient-conditions}:

\begin{enumerate}
\item \emph{General Programs (Corollary~\ref{prop:romisch-22})}: For arbitrary programs with Lipschitz value functions following R\"omisch's growth conditions, we showed that regret domination holds with $\beta = \frac{L \cdot R \cdot \max\{1, \sup_{\xi \in \Xi} \|\xi\|\}}{\alpha}$ when the ground cost satisfies minimal growth $c(\xi, \xi') \geq \alpha\|\xi - \xi'\|$.

\item \emph{Linear Programs (Proposition~\ref{prop:lp-sensitivity})}: For continuous second-stage problems with fixed cost vector $q$, sensitivity analysis yields sharper bounds with regret controlled by $M_{\pi} \cdot [\|h(\xi) - h(\xi')\| + R\|T(\xi) - T(\xi')\|]$, where $M_{\pi}$ bounds the dual variables over the dual feasible set $\{\pi : W^T\pi \leq q\}$.

\item \emph{Mixed-Integer Programs (Proposition~\ref{prop:milp-integrality-gap})}: For discrete second-stage problems with RHS or technology matrix uncertainty, we showed that regret bounds can be obtained by combining LP relaxation sensitivity with integrality gap estimates. When the constraint matrix is totally unimodular (e.g., network flows), the integrality gap vanishes and we recover exact bounds. For general MILPs, problem-specific analysis often yields tighter bounds by directly exploiting combinatorial structure rather than relying on worst-case integrality gaps.
\end{enumerate}

The treatment of the cases where the second-stage problem is a MILPs is more challenging than its continuous counterpart. While the lack of strong duality prevents direct application of classical techniques, we have shown that
\begin{itemize}
\item general bounds can be obtained via LP relaxation plus integrality gap analysis, providing a systematic framework when problem-specific analysis is difficult,
\item for structured problems (network flows, facility location, knapsack), our flexibility in the ground cost design allows the use of direct combinatorial arguments, which often yield tighter bounds.
\end{itemize}

The regret domination concept likely extend beyond the pure LP and MILP cases. For strongly convex second-stage problems, sensitivity analysis still applies but requires bounding subgradients rather than dual variables, with regret bounds depending on the modulus of strong convexity and Lipschitz constants of the gradients. For risk-aware formulations involving CVaR and other risk measures, regret domination can be established using the dual representation of the risk measure, with constants depending on the risk aversion level and the underlying problem structure.

Problem-dependent costs have shown computational potential \cite{bertsimas2023,keutchayan2023}. In the near future, we intend to provide extensive numerical comparisons between different ground cost choices in the context of scenario reduction. Such future work will also develop an iterative scheme to adaptively compute problem-dependent costs based on current scenario approximations, potentially reducing the computational burden while maintaining solution quality. 

This work provides the theoretical foundation for more effective scenario reduction in stochastic programming. It justifies optimal transport based approach with a problem dependant ground cost instead of a distance. For continuous problems, sensibility analysis yield relevant problem-based ground costs. For discrete problems, we opened new avenues for scenario reduction that respect the combinatorial nature of the decisions.

\section*{Acknowledgments}

This study received funding from the European Union -- Next-GenerationEU -- National Recovery and Resilience Plan (NRRP) –- Mission 4, Component 2, Investment n.~1.1 (call PRIN 2022 D.D. 104 02-02-2022, project title ``Large-scale optimization for sustainable and resilient energy systems'', CUP I53D23002310006) and from PGMO IROE Project N°P-2022-0020 "A Clear Win-Win Case: Interfacing \texttt{SMS++} with PyPSA".

\bibliographystyle{plain}
\bibliography{reports}

\appendix
\
% Include appendices, uncomment 
% \include{stability_proof_appendices}

\end{document}